\documentclass[12pt]{amsart}
\usepackage{geometry}                
\geometry{a4paper}                   
\usepackage{graphicx}
\usepackage{amssymb}
\usepackage{epstopdf}
\usepackage{enumerate}
\DeclareGraphicsRule{.tif}{png}{.png}{`convert #1 `dirname #1`/`basename #1 .tif`.png}
\newtheorem{df}{Definition}[section]

\newtheorem{thm}{Theorem}[section]
\newtheorem{prop}{Proposition}[section]
\newtheorem{lm}{Lemma}[section]
\newtheorem{ex}{Example}[section]
\newtheorem{remark}{Remark}[section]
\newtheorem{fact}{Fact}[section]


\makeatletter
\@addtoreset{equation}{section}
\makeatother
\title{On the space of theta functions for a prime level}
\author{Kennichi Sugiyama}

\begin{document}
\maketitle

\begin{center}
Department of Mathematics, Faculty of Science,\\ 
Rikkyo University, 3-34-1 Nishi-Ikebukuro, Toshima,\\
Tokyo 171-8501, Japan \\
e-mail address : kensugiyama@rikkyo.ac.jp
\end{center}

\begin{abstract} 
c\\


Key words: Hecke conjecture, a theta function, a quaternion algebra, a Brandt matrix.\\
AMS classification 2010: 11E12, 11F11, 11F12, 11F27, 11G20, 11H56, 14K25, 14G35.
\end{abstract}

\section{Introduction}

In \cite{Hecke}, Hecke expected that an explicit set of theta series obtained from maximal orders of the definite quaternion algebra over ${\mathbb Q}$ which is ramified at a prime $N$ will be a basis of space $M_2(\Gamma_0(N))$. However, later Eichler noticed that Hecke's conjecture does not hold in general (\cite{Eichler}). It is natural to ask for the dimension of the subspace of $M_2(\Gamma_0(N))$ spanned by the theta series.  This question is called {\em Hecke's basis problem} (\cite{Gross} p.143). In \cite{B-S}, B\"ocherer and Schulze-Pillot have given an answer using the theory of theta liftings. In this paper we will give another proof of their results using arithmetic and geometric properties of the modular curve.\\

%
Let $N$ be a prime and $\{E_1,\cdots, E_n\}$ the set of isomorphism classes of supersingular elliptic curves defined over the algebraic closure ${\mathbb F}$ of ${\mathbb F}_N$. Set $E_i=[i]$ and we let $X$ be the free abelian group generated by $\{[1],\cdots, [n]\}$, 
\[X=\oplus_{i=1}^{n}{\mathbb Z}[i]\]
and define {\em the monodromy  pairing} on $X$ to be
\begin{equation}\label{eq:eq1-1}([i],[j])=w_i\delta_{ij},\end{equation}
where $w_i$ is half of the order of the automorphism group of $E_i$ and $\delta_{ij}$ is Kronecker's delta.
Clearly this is symmetric and its extension to $X\otimes{\mathbb R}$ is positive definite.
%
For a positive integer $m$, we define {\rm the Hecke operator}  $T_m$ by
\begin{equation}\label{eq:eq1-2}T_m(E_j)=\sum_{C\subset E_j}E_j/C,\end{equation}
where $C$ runs through subgroup schemes of $E_j$ of order $m$.  The representing matrix $B(m)=(B(m)_{ij})_{ij}$ of $T_m$ with respect to the basis $\{[1],\cdots, [n]\}$ is called {\em the $m$-th Brandt matrix}, 
\begin{equation}\label{eq:eq1-3}T_m([j])=\sum_{i=1}^{n}B(m)_{ij}[i].\end{equation}
For every positive integer $m$, $B(m)$ is self-adjoint for the monodromy paring (see (2.3)). Note that our definition of the Brandt matrix is the transposition of Gross' one (\cite{Gross} {\bf Proposition 4.4}). Set
\[B(0)=\frac{1}{2}\left(  \begin{array}{ccc}
    1/w_1 & \cdots & 1/w_1 \\ 
    \vdots & \ddots & \vdots \\ 
    1/w_n & \cdots & 1/w_n \\ 
  \end{array}
\right)\]
and define {\em the theta function} $\theta_{ij}$ to be
\[\theta_{ij}=\sum_{m=0}^{\infty}B_{ij}(m)q^m,\quad q=e^{2\pi iz}.\]
It is an element of $M_2(\Gamma_0(N))$, and $\{\theta_{ij}\}_{ij}$ generates $M_2(\Gamma_0(N))$ (see also {\bf Theorem 3.1}). For $1\leq i \leq n$, we let $\Theta_{i}$ be the ${\mathbb C}$-linear subspace of $M_2(\Gamma_0(N))$ spanned by $\{\theta_{i1},\cdots,\theta_{in}\}$:
\[\Theta_{i}:=\langle \theta_{i1},\cdots,\theta_{in} \rangle \subset M_2(\Gamma_0(N)).\]
Let $R_i$ be the endomorphism ring of $E_i$. It is a maximal order of the definite quaternion algebra ${\mathbb B}$ ramified at $N$, and each conjugacy classe of maximal orders in ${\mathbb B}$ appears once or twice in $\{R_1,\cdots,R_n\}$. The space $\Theta_{i}$ will be called as {\em the space of theta functions of $R_i$}. As we have mentioned before, Hecke expected that $\Theta_{i}$ will coincide with $M_2(\Gamma_0(N))$  for all $i$. However Eichler noticed that this conjecture does not hold in general. In fact if $N=37$, there is a maximal order $R_i$ such that $\Theta_{i}$ is strictly smaller than $M_2(\Gamma_0(37))$ (it is known that $N=37$ is the smallest prime level that Hecke's conjecture fails \cite{Pizer}. See also {\bf Example 4.2} and {\bf Theorem 3.5} below). We will determine the dimension and a basis of $\Theta_{i}$. In order to state our results we recall basic facts on the Hecke algebra.\\

Let ${\mathbb T}$ be the commutative subalgebra of ${\rm End}_{\mathbb Z}(X)$ generated by the Hecke operators, called {\em the Hecke algebra}. Then ${\mathbb T}$ is commutative, and since the action of $T\in {\mathbb T}$ on $X$ is symmetric for the monodromy paring, 
there is an orthonormal basis $\{{\mathbf f}_1,\cdots,{\mathbf f}_n\}$ of $X\otimes{\mathbb R}$ for the monodromy paring such that
\[T({\mathbf f}_i)=\alpha_i(T){\mathbf f}_i,\quad \forall T\in {\mathbb T}\]
where $\alpha_i$ is an algebraic homomorphism from ${\mathbb T}$ to ${\mathbb R}$. Hereafter an algebraic homomorphism from ${\mathbb T}$ to ${\mathbb C}$ is called {\em a character}, and if it is real valued we say it {\em real}.  Let ${\mathbb T}_0(N)$ be the Hecke algebra for Hecke's congruence subgroup $\Gamma_0(N)$.  It is a commutative subalgebra of the endomorphism ring of $M_2(\Gamma_0(N))$. In \S 2, we will show that ${\mathbb T}\otimes {\mathbb Q}$ is naturally isomorphic to ${\mathbb T}_0(N)\otimes {\mathbb Q}$, and we will identify them and denote them by ${\mathbb T}\otimes {\mathbb Q}$.
 There is an isomorphism of ${\mathbb T}\otimes {\mathbb Q}$-modules
\[X\otimes{\mathbb C} \simeq M_2(\Gamma_0(N)),\]
which maps ${\mathbf f}_i$ to a normalized Hecke eigenform  $f_i$ (cf. {\bf Proposition 2.1}). This fact is well-known (for example  \cite{Emerton} {\bf Theorem 3.1} and {\bf Corollary 3.2}) but we give a proof for the sake of convenience. The multiplicity one theorem implies that the characters $\{\alpha_i\}_i$ are mutually distinct and ${\mathbf f}_i$ is determined up to sign. Let us fix $1\leq i \leq n$. Writing
\[{\mathbf f}_k=\sum_{i=1}^{n}f_{ik}[i],\quad f_{ik}\in {\mathbb R}\]
we set
\[\Sigma(i)=\{k : ([i], {\mathbf f}_k)\neq 0\}=\{k : f_{ik}\neq 0\}.\]
Note that $\Sigma(i)$ depends on the ordering and is independent of the choice of $\{{\mathbf f}_1,\cdots,{\mathbf f}_n\}$. Here is our main theorem.
\begin{thm} $\{f_{\kappa}\}_{\kappa\in \Sigma(i)}$ is a basis of $\Theta_i$. In particular
\[{\rm dim}\Theta_i=|\Sigma(i)|,\]
where $|\cdot|$ denotes the cardinality.
\end{thm}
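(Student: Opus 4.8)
The strategy is to identify $\Theta_i$ explicitly inside $M_2(\Gamma_0(N))$ via the ${\mathbb T}\otimes{\mathbb Q}$-module isomorphism $X\otimes{\mathbb C}\simeq M_2(\Gamma_0(N))$ sending ${\mathbf f}_k\mapsto f_k$. Under this identification each $f_k$ is a simultaneous Hecke eigenform with eigenvalue system $\alpha_k$, and the $\{f_k\}$ form a basis of $M_2(\Gamma_0(N))$ with the $\alpha_k$ pairwise distinct (multiplicity one). The first step is to translate the generators $\theta_{ij}$ of $\Theta_i$ into this eigenbasis. Writing ${\mathbf f}_k=\sum_i f_{ik}[i]$, one computes from the definition $\theta_{ij}=\sum_m B_{ij}(m)q^m$ that, up to the $m=0$ term, $\theta_{ij}$ is the generating function of the matrix coefficients $([i],T_m[j])/w_i$ (self-adjointness of $B(m)$ gives the symmetric version). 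Expanding $[j]=\sum_k f_{jk}{\mathbf f}_k$ in the orthonormal basis and using $T_m{\mathbf f}_k=\alpha_k(T_m){\mathbf f}_k$ together with $([i],{\mathbf f}_k)=f_{ik}w_i$, one gets
\[
\theta_{ij}=\frac{1}{w_i}\sum_{k=1}^{n} f_{ik}f_{jk}\Big(\sum_{m\ge 1}\alpha_k(T_m)q^m\Big)+(\text{constant term}),
\]
and since $f_k=\sum_{m\ge 1}\alpha_k(T_m)q^m$ (normalized, with the constant terms handled correctly by the definition of $B(0)$), this reads $\theta_{ij}=w_i^{-1}\sum_k f_{ik}f_{jk}\,f_k$. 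Hence $\theta_{ij}\in\langle f_k : f_{ik}\neq 0\rangle=\langle f_\kappa:\kappa\in\Sigma(i)\rangle$ for every $j$, so $\Theta_i\subseteq\langle f_\kappa\rangle_{\kappa\in\Sigma(i)}$.

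For the reverse inclusion, fix $\kappa\in\Sigma(i)$, so $f_{i\kappa}\neq 0$. Since $\{{\mathbf f}_1,\dots,{\mathbf f}_n\}$ is an orthonormal basis of $X\otimes{\mathbb R}$, the vector $[j]$ has nonzero ${\mathbf f}_\kappa$-component for at least one $j$ — indeed $\sum_j f_{j\kappa}[\text{something}]$; more precisely, because $({\mathbf f}_\kappa,{\mathbf f}_\kappa)=1\neq 0$ and the $[j]$ span $X\otimes{\mathbb R}$, some $f_{j_0\kappa}\neq 0$. Then $\theta_{ij_0}=w_i^{-1}\sum_k f_{ik}f_{j_0 k}f_k$ has nonzero $f_\kappa$-coefficient. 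To extract $f_\kappa$ itself, apply the Hecke operators: since $\Theta_i$ is ${\mathbb T}$-stable (the $T_m$ act on the space of theta series — this follows from $T_r\theta_{ij}=\sum_l B_{lj}(r)\theta_{il}$, a consequence of $B(r)B(m)=\sum B(\cdots)$ or directly from $T_m$ acting through the $j$-index on $X$), one has that $\Theta_i$ is a ${\mathbb T}$-submodule of $M_2(\Gamma_0(N))$. A ${\mathbb T}$-submodule of $\bigoplus_k{\mathbb C}f_k$ containing a vector with nonzero $f_\kappa$-component must contain $f_\kappa$, because the idempotent of ${\mathbb T}\otimes{\mathbb C}$ projecting onto the $\kappa$-eigenspace (available since the $\alpha_k$ are distinct) carries that vector to a nonzero multiple of $f_\kappa$. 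Therefore $f_\kappa\in\Theta_i$ for all $\kappa\in\Sigma(i)$, giving $\langle f_\kappa\rangle_{\kappa\in\Sigma(i)}\subseteq\Theta_i$, and the two inclusions give equality. The dimension count $\dim\Theta_i=|\Sigma(i)|$ is then immediate since $\{f_k\}$ is linearly independent.

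**Main obstacle.** The computational heart — and the step I expect to require the most care — is establishing the clean formula $\theta_{ij}=w_i^{-1}\sum_k f_{ik}f_{jk}f_k$, in particular getting the constant terms right. The definition of $B(0)$ with the $1/w_i$ factors is rigged precisely so that the constant term of $\theta_{ij}$ matches $w_i^{-1}\sum_k f_{ik}f_{jk}\cdot(\text{const.\ term of }f_k)$; verifying this amounts to knowing that the Eisenstein eigenform among the $f_k$ pairs correctly with the all-ones-type vector and that the cusp forms have vanishing constant term, which in turn uses $\sum_j 1 = \deg$ relations and the precise normalization in Proposition 2.1. The other potentially delicate point is justifying that $\Theta_i$ is genuinely ${\mathbb T}$-stable as a subspace of $M_2(\Gamma_0(N))$ rather than merely ${\mathbb T}$-stable formally on $q$-expansions; but this follows from the identification of ${\mathbb T}\otimes{\mathbb Q}$ with ${\mathbb T}_0(N)\otimes{\mathbb Q}$ established in \S 2, so it should be routine once that identification is in hand. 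Everything else — the linear algebra of projecting onto distinct eigenspaces, the spanning argument for the existence of $j_0$ — is formal.
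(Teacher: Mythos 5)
Your proof is correct, and its computational core coincides with the paper's: the identity expressing $\theta_{ij}$ in the Hecke eigenbasis is exactly the paper's {\bf Theorem 3.2 (2)}, obtained there by composing the map $\phi_{[i]}(x)(T)=([i],Tx)$ with the isomorphism $\mu:\alpha\mapsto\sum_{m\geq 1}\alpha(T_m)q^m$ and evaluating on the two bases $\{[j]\}$ and $\{{\mathbf f}_k\}$. Where you genuinely diverge is the reverse inclusion $f_\kappa\in\Theta_i$ for $\kappa\in\Sigma(i)$: the paper gets this for free from linearity, since $\Theta_i=\mu\phi_{[i]}(X\otimes{\mathbb C})$ and $\mu\phi_{[i]}({\mathbf f}_\kappa)=([i],{\mathbf f}_\kappa)f_\kappa$ is already a nonzero multiple of $f_\kappa$; you instead invoke ${\mathbb T}$-stability of $\Theta_i$ (via $T_r\theta_{ij}=\sum_l B(r)_{lj}\theta_{il}$, which is correct and corresponds to the paper's {\bf Remark 3.1}) together with the eigenspace idempotents of ${\mathbb T}\otimes{\mathbb C}\simeq{\mathbb C}^n$. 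Both finishes are valid; the paper's is lighter because it never needs $\Theta_i$ to be a Hecke submodule, while yours establishes that structural fact as a byproduct. Two bookkeeping remarks. First, your constant is off: with ${\mathbf f}_k=\sum_i f_{ik}[i]$ one has $([i],{\mathbf f}_k)=w_if_{ik}$, hence $w_i\theta_{ij}=\sum_k([i],{\mathbf f}_k)([j],{\mathbf f}_k)f_k$, i.e.\ $\theta_{ij}=w_j\sum_kf_{ik}f_{jk}f_k$ rather than $w_i^{-1}\sum_kf_{ik}f_{jk}f_k$; this is harmless since $([i],{\mathbf f}_k)\neq0$ exactly when $f_{ik}\neq0$, so $\Sigma(i)$ and the resulting span are unaffected. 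Second, the constant-term issue you single out as the main obstacle evaporates under the paper's convention: since $M_2(\Gamma_0(N))\cap{\mathbb C}=0$, a form is determined by its coefficients $a_m$ with $m\geq1$, so every identity is verified without ever touching $B(0)$.
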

This yields results (see {\bf Theorem 3.5} and {\bf Theorem 3.4}) which explain Pizer's result (\cite{Pizer} {\bf Theorem 3.2}) and an observation (\cite{Ohta} \S 1) due to Ohta.
As we have mentioned before, {\bf Theorem 1.1} has been obtained by B\"ocherer and Schulze-Pillot (\cite{B-S} {\bf Proposition 10.1}) by the theory of theta liftings. In this paper, we will adopt a different approach using arithmetic geometry.\\

{\bf Acknowledgement.} The author deeply appreciates Professor Ibukiyama and Professor Ohta for useful comments and kind advices. In particular, Professor Ohta informed us of Emerton's paper \cite{Emerton}. He has also pointed out that {\bf Theorem 3.4} can also be derived from \cite{Ohta}, and that {\bf Theorem 3.5} is related with Pizer's results. The author also appreciates the referee, who kindly pointed out mistakes and suggested the beautiful proof of {\bf Theorem 3.5}. Finally Professor T. Geisser kindly informed valuable comments and many mistakes of English by careful reading the manuscript.

%
\section{Brandt matrices and modular forms}
\subsection{The Brandt matrix}
In this subsection  we will recall the theory of Brandt matrices following \cite{Gross}. Let $N$ be a prime and let ${\mathbb B}$ be the quaternion algebra over ${\mathbb Q}$ ramified at two places $N$ and $\infty$. Let $R$ be a fixed maximal order in ${\mathbb B}$ and $\{I_1,\cdots,I_n\}$ the set of left $R$-ideals representing the distinct ideal classes. We call $n$ {\it the class number} of ${\mathbb B}$. We choose $I_1=R$. For $1\leq i \leq n$, let $R_i$ denote the right order of $I_i$:
\[R_{i}=\{b\in {\mathbb B}\,|\, I_ib\subset I_i\}.\]
and $w_i$ the order of $R_i^{\times}/\{\pm 1\}$. The product 
\begin{equation}\label{eq:eq2-1}W=\prod_{i=1}^{n}w_{i}\end{equation}
is independent of the choice of $R$ and is equal to the exact denominator of $\frac{N-1}{12}$ (\cite{Gross} p.117). Eichler's mass formula states that
\[\sum_{i=1}^{n}\frac{1}{w_i}=\frac{N-1}{12}.\]
The set
\[I_j^{-1}=\{b\in {\mathbb B}\,|\, I_j b I_j \subset I_j\}\]
is a right $R$-ideal whose left order is $R_j$. Then the product $M_{ij}=I_j^{-1}I_i$ is a left $R_j$-ideal with the right order $R_i$. For $x \in M_{ij}$,  let ${\mathbb N}(x)$ be its reduced norm and let ${\mathbb N}(M_{ij})$ denote the unique positive rational number such that the quotients ${\mathbb N}(x)/{\mathbb N}(M_{ij})$ are all integers with no common factor. We define the theta function $\theta_{ij}$ by
\[\theta_{ij}=\frac{1}{2w_i}\sum_{x\in M_{ij}}q^{{\mathbb N}(x)/{\mathbb N}(M_{ij})}=\frac{1}{2w_i}+\sum_{m=1}^{\infty}B_{ij}(m)q^m,\quad q=e^{2\pi iz}\]
and the {\em $m$-th Brandt matrix} $B(m)$ is defined to be
\[B(m)=(B(m)_{ij})_{1\leq i, j\leq n}.\]
For $m\geq 1$, $B(m)$ has the following geometric description. Let ${\mathbb F}$ be an algebraic closure of ${\mathbb F}_N$. There are $n$ distinct isomorphism classes $\{E_1,\cdots,E_n\}$ of supersingular elliptic curves over ${\mathbb F}$ such that ${\rm End}(E_i)$ is $R_i$. Then one has an isomorphism
\[M_{ij}\simeq {\rm Hom}(E_j,E_i),\quad x \mapsto \phi_x\]
satisfying
\[{\rm deg}\phi_x={\mathbb N}(x)/{\mathbb N}(M_{ij}),\quad x\in M_{ij}.\]
For a positive integer $m$ let ${\rm Hom}(E_j,\,E_i)(m)$ denote the set of homomorphisms from $E_j$ to $E_i$ of degree $m$. Then
\begin{equation}\label{eq:eq2-2}B(m)_{ij}=\frac{1}{2w_i}|{\rm Hom}(E_j,\,E_i)(m)|.\end{equation}
Since ${\rm Hom}(E_j,\,E_i)(m)$ has a faithful action of $R_{i}^{\times}$ from the right, $B(m)_{ij}$ is an nonnegative integer and is equal to the number of subgroup schemes $C$ of order $m$ in $E_j$ satisfying $E_{j}/C\simeq E_i$ (\cite{Gross} {\bf Proposition 2.3}).  Thus (2.2) coincides with (1.3).
In particular, $T_N(E_i)$ is the image of the $N$-th power Frobenius $F$ of $E_i$:
\[T_N(E_i)=E/{\rm Ker F}=E_i^{F}. \]
Since each of $\{E_i\}_{1\leq i \leq n}$ is defined over ${\mathbb F}_{N^2}$, $B(N)$ is a permutation matrix of order dividing $2$. More precisely, $E_i$ and $E_j$ are conjugate by an automorphism of ${\mathbb F}$ if and only if $i=j$ or  $B(N)_{ij}$ is $1$ (\cite{Gross} {\bf Proposition 2.4}). \\

Taking the dual isogeny we have a bijective correspondence
\[I : {\rm Hom}(E_i,\,E_j)(m) \to {\rm Hom}(E_j,\,E_i)(m),\quad I(\phi)=\check{\phi},\]
which implies
\begin{equation}\label{eq:eq2-3}w_iB(m)_{ij}=w_jB(m)_{ji},\quad \forall m \geq 1\end{equation}
and $T_m$ is symmetric for the monodromy pairing. Let ${\mathbb T}$ be the subalgebra of ${\rm End}_{\mathbb Z}(X)$ generated by $\{T_p\}_p$ ($p$ runs through all primes), which is known to be commutative (\cite{Gross} {\bf Proposition 2.7}).
\begin{remark} Our definition of a Brandt matrix is the transposition of Gross' one. 
\end{remark}
\subsection{Brandt matrices and modular forms}

Let $M_2(\Gamma_0(N))$ and $S_2(\Gamma_0(N))$ denote the space of modular and cusp forms of weight $2$ for the Hecke congruence subgroup
\[\Gamma_{0}(N):=\{\left(
  \begin{array}{cc}
    a & b \\ 
    c & d \\ 
  \end{array}\right) \in {\rm SL}_2({\mathbb Z}) : \quad c \equiv 0\, ({\rm mod}\,N)\},
\]
respectively. It is known that ${\rm dim}M_2(\Gamma_0(N))=n$ and that
\begin{equation}\label{eq:eq2-4}M_2(\Gamma_0(N))=S_2(\Gamma_0(N))\oplus {\mathbb C}F,\end{equation}
where $F$ is the Eisenstein series defined by
\[F=\frac{N-1}{24}+\sum_{m=1}^{\infty}\sigma(m)_Nq^m, \quad \sigma(m)_N=\sum_{d|m,(d,N)=1}d.\]
Both the spaces $M_2(\Gamma_0(N))$ and $S_2(\Gamma_0(N))$ have an action by Hecke operators, which we will recall (see  \cite{Shimura1971} for details). \\

Let $Y_{0}(N)$ be the generic fiber of the coarse moduli scheme over ${\mathbb Z}$
which parametrizes isomorphism classes of pairs ${\bf E}=(E,\Gamma_N)$ of an elliptic curve $E$ together with a cyclic subgroup scheme $\Gamma_N$ of order $N$. It is a smooth affine curve defined over ${\mathbb Q}$, and its set of ${\mathbb C}$-valued points is the quotient of the upper half plane by $\Gamma_0(N)$. The compactification $X_0(N)$ of $Y_{0}(N)$ is a smooth projective curve defined over ${\mathbb Q}$ which has a finite number of cusps as points at infinity.
For a prime $p$ different from $N$, $X_0(N)$ furnishes the $p$-th Hecke operator defined by
 \[T_p(E,\Gamma_N):=\sum_{C}(E/C, (\Gamma_N + C)/C),\]
where $C$ runs through all subgroup schemes of $E$ of order $p$. On the other hand the operator $T_N$ (denoted by $U_N$ in the literatures) is defined by
 \[T_N(E,\Gamma_N):=\sum_{D\neq \Gamma_N}(E/D, (\Gamma_N + D)/D),\]
 where $D$ runs through subgroup schemes of $E$ of order $N$ different from $\Gamma_N$. These correspondences define endomorphisms of $M_2(\Gamma_0(N))$ and $S_2(\Gamma_0(N))$ which are denoted by the same symbols.
The effects of the Hecke operator on a modular form $f=\sum_{m=0}^{\infty}a_m(f)q^m$ are
\[f|T_p=\sum_{m=0}^{\infty}(a_{pm}(f)+pa_{m/p}(f))q^m,\quad p\neq N\]
and
\[f|T_N=\sum_{m=0}^{\infty}a_{mN}q^m.\]
Here $a_{m/p}$ is understood to be $0$ if $m/p$ is not an integer. We define the Hecke algebra as ${\mathbb T}_0(N)={\mathbb Z}[\{T_p\}_p] \subset {\rm End}(M_2(\Gamma_0(N)))$. Then ${\mathbb T}_0(N)$ preserves the decomposition (2.4) and we denote its restriction to $S_2(\Gamma_0(N))$ by ${\mathbb T}^{c}_0(N)$.
 The Eisenstein series $F$ satisfies
\begin{equation}\label{eq:eq2-5}F|T_m=\sigma(m)_NF,\quad m\geq 1\end{equation}
and is a Hecke eigenform of character $\sigma$ which is defined by
\[\sigma(T_m)=\sigma(m)_N.\]
We have an embedding
\begin{equation}\label{eq:eq2-6}{\mathbb T}_0(N)\otimes {\mathbb Q}\hookrightarrow ({\mathbb T}^{c}_0(N)\otimes{\mathbb Q}) \times {\mathbb Q},\quad T=(T|_{S_2(\Gamma_0(N)))},\sigma(T)).\end{equation}
We claim that this is an isomorphism. In fact, it is known that $S_2(\Gamma_0(N))$ has a spectral decomposition 
\[S_2(\Gamma_0(N))=\oplus _{i=1}^{n-1}{\mathbb C}f_i.\]
Here $\{f_1,\cdots,f_{n-1}\}$ are normalized Hecke eigenforms such that
\[T(f_i)=\alpha_i(T)f_i,\quad \forall T\in {\mathbb T}_0,\]
where $\alpha_i$ is a character of ${\mathbb T}^{c}_0(N)\otimes{\mathbb Q}$ (see also the arguments following {\bf Remark 2.2}). By the multiplicity one theorem (\cite{Atkin-Lehner}, \cite{Li}) $\{\alpha_1,\cdots,\alpha_{n-1}\}$ are mutually different and the Eichler-Shimura congruence relation and the Weil conjecture imply
\[|\alpha_i(T_p)|\leq 2\sqrt{p},\quad 1\leq i\leq n-1\]
for any prime $p$ different from $N$. On the other hand the character $\sigma$ satisfies
\[|\sigma(T_p)|=1+p> 2\sqrt{p},\quad \forall p\neq N.\]
Thus ${\mathbb T}_0(N)\otimes {\mathbb Q}$ has distinct $n$ characters $\{\alpha_1,\cdots,\alpha_{n-1} ,\sigma\}$ and ${\rm dim}_{\mathbb Q}{\mathbb T}_0(N)\otimes {\mathbb Q}=n$. Hence (\ref{eq:eq2-6}) is an isomorphism 
%
and we have a decomposition
\begin{equation}\label{eq:eq2-7}
{\mathbb T}_0(N)\otimes {\mathbb Q}=({\mathbb T}^{c}_0(N)\otimes{\mathbb Q}) \times {\mathbb Q},\quad T=(T|_{S_2(\Gamma_0(N)))},\sigma(T)).
\end{equation}
Using this we will relate ${\mathbb T}_0(N)\otimes{\mathbb Q}$ with ${\mathbb T}\otimes{\mathbb Q}$.\\

The canonical model of $X_0(N)$ over ${\mathbb Z}$ is studied in detail in \cite{Deligne-Rapoport} and \cite{Katz-Mazur}. Applying these results to our case we see that the reduction $X_0(N)_{{\mathbb F}_N}$ of the model at the prime $N$ has two irreducible components $C_F$ and $C_V$, which are isomorphic to the projective line ${\mathbb P}^{1}=X_0(1)$. Over $C_F$ (resp. $C_V$) $\Gamma_N$ is the kernel of the Frobenius $F$ (resp. the Verschiebung $V$) and $C_F$ and $C_V$ transversally intersect at supersingular points $\Sigma_N=\{E_1,\cdots,E_n\}$.  Thus the group $X$ in the introduction is the free abelian group generated by $\Sigma_N$. 
Now consider the homomorphism
\[\partial : X \to {\mathbb Z}C_F\oplus {\mathbb Z}C_V, \quad \partial(E_i)=C_F-C_V,\]
which is compatible with the action of ${\mathbb T}$. This is the simplicial complex of the dual graph of $X_0(N)_{{\mathbb F}_N}$. Since $X_0$ is the kernel of $\partial$, we have an exact sequence of ${\mathbb T}$-modules
\begin{equation}\label{eq:eq2-8}
0 \to X_0 \to X \stackrel{\partial}\to {\mathbb Z}\epsilon \to 0,\quad \epsilon=C_F-C_V.
\end{equation}
As in the introduction, let $[i]$ denote $E_i$. Then 
 \[\partial([i])=\epsilon,\quad 1\leq \forall i \leq n\]
 and
\[X_0=\{\sum_{i=1}^{n}a_i [i]\,|\,a_i\in{\mathbb Z},\,\sum_{i=1}^{n}a_i=0\}.\]
Let ${\mathbb T}_0$ be the restriction of ${\mathbb T}$ to $X_0$. As we will explain below it is closely related to ${\mathbb T}^{c}_0(N)$. \\

The space of cusp forms $S_2(\Gamma_0(N))$ can be naturally identified with the space of holomorphic $1$-forms $H^0(X_0(N),\Omega)$ and in particular  with the holomorphic cotangent space ${\rm Cot}J_0(N)$ at the origin of the Jacobian variety $J_0(N)$ of  $X_0(N)$. By functoriality, Hecke operators act on $J_0(N)$ and they generate a commutative subalgebra of ${\rm End}(J_0(N))$, which is temporarily denoted by ${\mathbb T}^{\prime}$. The action of Hecke operators induces one on ${\rm Cot}J_0(N)$ which coincides with action of ${\mathbb T}^{c}_0(N)$ on $S_2(\Gamma_0(N))$. Since the action is faithful, ${\mathbb T}^{\prime}$ and ${\mathbb T}^{c}_0(N)$ are isomorphic and they will be identified.
Let ${\mathcal J}_0(N)$ be the N\'eron model of $J_0(N)$ over ${\mathbb Z}$. It is known that  the identity component of the reduction of ${\mathcal J}_0(N)$ at $N$ is a torus, which is denoted by ${\mathcal T}$.  By the N\'{e}ron property, it admits an action of ${\mathbb T}^{\prime}$.
By \cite{Ribet1990} {\bf Proposition 3.1}, $X_0$ is the character group of ${\mathcal T}$ and the induced action of ${\mathbb T}^{\prime}$ on $X_0$ coincides with one of ${\mathbb T}_0$. Therefore ${\mathbb T}_0$ is the image of ${\mathbb T}^{\prime}$ in ${\rm End}_{\mathbb Z}(X_0)$.
Moreover by \cite{Ribet1990} {\bf Theorem 3.10} the action of ${\mathbb T}^{\prime}$ on $X_0$ is faithful and ${\mathbb T}^{\prime}(={\mathbb T}^{c}_0(N))$ is isomorphic to ${\mathbb T}_0$. Hence hereafter we will identify ${\mathbb T}_0$ and ${\mathbb T}^{c}_0(N)$. \\

Let us investigate the action of Hecke operators on $\epsilon$. Let $p$ be a prime. Then a simple computation shows that 
\[T_p(C_F)=(p+1)C_F,\quad T_p(C_V)=(p+1)C_V,\quad T_{p}(\epsilon)=(p+1)\epsilon,\]
for $p\neq N$, and 
\[T_N(C_F)=C_F,\quad T_N(C_V)=C_V,\quad T_{N}(\epsilon)=\epsilon.\]
Thus we have
\begin{equation}\label{eq:eq2-9}T_m(\epsilon)=\sigma(m)_N\epsilon, \quad  \sigma(m)_N=\sum_{d|m,(d,N)=1}d\end{equation}
and $\epsilon$ is a Hecke eigenvector for the character $\sigma$. 
%
%
%
%
We extend the monodromy pairing to a positive definite symmetric bilinear form on $X\otimes{\mathbb R}$. Remember that $T\in {\mathbb T}$ is self adjoint for the monodromy pairing:
\begin{equation}\label{eq:eq2-10}(Tx,y)=(x,Ty),\quad \forall x,y \in X.\end{equation}
Since $X_0\otimes{\mathbb Q}$ is stable under the action of ${\mathbb T}$, so is the orthogonal complement $(X_0\otimes{\mathbb Q})^{\perp}$. It has dimension one and we choose a base vector ${\mathbf b}$.
Then (\ref{eq:eq2-8}) and (\ref{eq:eq2-9}) imply
\[T_m({\mathbf b})=\sigma(T_m){\mathbf b}.\]
Thus we have an orthogonal decomposition
\[X\otimes{\mathbb Q}=(X_0\otimes{\mathbb Q})\hat{\oplus} {\mathbb Q}{\mathbf b}\]
stable under ${\mathbb T}$ ($\hat{\oplus}$ means an orthogonal direct sum) and an injective homomorphism
\begin{equation}\label{eq:eq2-11}{\mathbb T}\otimes{\mathbb Q}\hookrightarrow ({\mathbb T}_0\otimes{\mathbb Q})\times {\mathbb Q},\quad T=(T|_{X_0\otimes{\mathbb Q}},\,\sigma(T)).\end{equation}
The proof of (\ref{eq:eq2-7}) shows that (\ref{eq:eq2-11}) is an isomorphism and therefore ${\mathbb T}\otimes{\mathbb Q}$ and ${\mathbb T}_0(N)\otimes{\mathbb Q}$ are isomorphic. We set
\[{\mathbf f}=\frac{{\mathbf b}}{||{\mathbf b}||} \in (X_0\otimes{\mathbb R})^{\perp}.\]
\begin{remark} Suppose $w_i=1$ for all $1\leq i \leq n$. Then the Brandt matrix is symmetric.  One easily check that $\delta:=\sum_{i=1}^{n}[i]$ is contained in $(X_0\otimes{\mathbb R})^{\perp}$ and 
\[T_m(\delta)=\sigma(m)_N\delta.\]
Therefore
\[{\mathbf f}=\frac{\delta}{||\delta||}.\]
\end{remark}
Since ${\mathbb T}_0$ is commutative and since all of its elements are symmetric for the monodromy pairing, we have a spectral decomposition,
\begin{equation}\label{eq:eq2-12}X_0\otimes{\mathbb R}=\oplus _{i=1}^{n-1}{\mathbb R}{\mathbf f}_i,\quad ||{\mathbf f}_i||=1,\end{equation}
where ${\mathbf f}_i$ is a simultaneous eigenvector.  i.e. there is a real character $\alpha_i : {\mathbb T}_0 \to {\mathbb R}$ such that
\[T({\mathbf f}_i)=\alpha_i(T){\mathbf f}_i,\quad \forall T\in {\mathbb T}_0.\]
Using the multiplicity one theorem (\cite{Atkin-Lehner} \cite{Li}), we have proved the following result.
\begin{fact}(\cite{Sugiyama}, {\bf Proposition 3.2})
The characters $\{\alpha_1,\cdots,\alpha_{n-1}\}$ are mutually distinct, and $X_0\otimes{\mathbb C}$ and $S_2(\Gamma_0(N))$ are isomorphic as ${\mathbb T}_0\otimes{\mathbb C}$-modules.
\end{fact}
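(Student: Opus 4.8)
The plan is to deduce the statement from the structure theory of the Hecke algebra together with a dimension count, with no essentially new geometric input. First I would record that, by the isomorphism ${\mathbb T}\otimes{\mathbb Q}\simeq{\mathbb T}_0(N)\otimes{\mathbb Q}$ just established (see (\ref{eq:eq2-11}), whose proof mirrors that of (\ref{eq:eq2-7})), the subalgebra ${\mathbb T}_0\otimes{\mathbb Q}\subset{\rm End}_{\mathbb Q}(X_0\otimes{\mathbb Q})$ is identified with ${\mathbb T}^{c}_0(N)\otimes{\mathbb Q}$. The spectral decomposition $S_2(\Gamma_0(N))=\oplus_{i=1}^{n-1}{\mathbb C}f_i$ into normalized eigenforms, whose characters are pairwise distinct by the multiplicity one theorem (\cite{Atkin-Lehner}, \cite{Li}), shows that ${\mathbb T}^{c}_0(N)\otimes{\mathbb Q}$ admits exactly $n-1=\dim S_2(\Gamma_0(N))$ distinct characters; since its ${\mathbb Q}$-dimension is also $n-1$, it is an \'etale ${\mathbb Q}$-algebra, i.e. a product $\prod_j K_j$ of number fields with $\sum_j[K_j:{\mathbb Q}]=n-1$. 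Consequently ${\mathbb T}_0\otimes{\mathbb C}\simeq{\mathbb T}^{c}_0(N)\otimes{\mathbb C}\simeq{\mathbb C}^{\,n-1}$, and, having one basis vector per character, $S_2(\Gamma_0(N))$ is free of rank one over it.

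Next I would show that $X_0\otimes{\mathbb C}$ is also free of rank one over this algebra. The key point is that ${\mathbb T}_0$ acts faithfully on $X_0$ — this is automatic, since ${\mathbb T}_0$ was defined as the image of ${\mathbb T}$ in ${\rm End}_{\mathbb Z}(X_0)$ (cf. also \cite{Ribet1990} {\bf Theorem 3.10}) — so ${\mathbb T}_0\otimes{\mathbb Q}=\prod_j K_j$ acts faithfully on $X_0\otimes{\mathbb Q}$. Writing $X_0\otimes{\mathbb Q}=\oplus_j e_j(X_0\otimes{\mathbb Q})$ for the idempotents $e_j$ of $\prod_j K_j$, each summand is a $K_j$-vector space of some dimension $d_j\geq 1$, the inequality coming from $e_j\neq 0$ and faithfulness. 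Since ${\rm rank}\,X_0=n-1$ we get $\sum_j d_j[K_j:{\mathbb Q}]=n-1=\sum_j[K_j:{\mathbb Q}]$, forcing every $d_j=1$; hence $X_0\otimes{\mathbb Q}\simeq\prod_j K_j$ as ${\mathbb T}_0\otimes{\mathbb Q}$-modules, and $X_0\otimes{\mathbb C}$ is free of rank one over ${\mathbb T}_0\otimes{\mathbb C}$.

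Two free rank-one modules over the same ring are isomorphic, so $X_0\otimes{\mathbb C}\simeq S_2(\Gamma_0(N))$ as ${\mathbb T}_0\otimes{\mathbb C}$-modules (equivalently as ${\mathbb T}^{c}_0(N)\otimes{\mathbb C}$-modules under the identification (\ref{eq:eq2-11})), which is the second assertion. For the first, ${\mathbb T}_0\otimes{\mathbb C}\simeq{\mathbb C}^{\,n-1}$ has exactly the $n-1$ distinct coordinate projections as its characters; the spectral decomposition (\ref{eq:eq2-12}) splits $X_0\otimes{\mathbb C}$ into the one-dimensional eigenspaces for these, so the characters $\{\alpha_1,\dots,\alpha_{n-1}\}$ occurring in (\ref{eq:eq2-12}) are mutually distinct.

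The only step requiring genuine care is the inference $d_j\geq 1$: it relies on the definitional fact that ${\mathbb T}_0$ sits inside ${\rm End}_{\mathbb Z}(X_0)$, so that no nonzero idempotent of ${\mathbb T}_0\otimes{\mathbb Q}$ can annihilate $X_0\otimes{\mathbb Q}$. Everything else is the linear algebra of modules over a product of fields, combined with the single dimension identity $\dim S_2(\Gamma_0(N))={\rm rank}\,X_0=\dim_{\mathbb Q}{\mathbb T}^{c}_0(N)\otimes{\mathbb Q}=n-1$; in particular the argument uses nothing beyond multiplicity one and the already-established comparison of the two Hecke algebras, and thus follows the lines of \cite{Sugiyama}, {\bf Proposition 3.2}.
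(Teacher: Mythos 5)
Your argument is correct and is essentially the proof the paper has in mind: it only writes ``Using the multiplicity one theorem, we have proved the following result'' and defers to \cite{Sugiyama}, {\bf Proposition 3.2}, and your dimension count showing that both $X_0\otimes{\mathbb C}$ and $S_2(\Gamma_0(N))$ are free of rank one over ${\mathbb T}_0\otimes{\mathbb C}\simeq{\mathbb C}^{n-1}$ is the standard way to fill in those details from the same two inputs (multiplicity one and the faithfulness of the Hecke action on $X_0$). The only nit is attributional: the identification ${\mathbb T}_0\otimes{\mathbb Q}\simeq{\mathbb T}^{c}_0(N)\otimes{\mathbb Q}$ that you invoke comes from \cite{Ribet1990} ({\bf Proposition 3.1} and {\bf Theorem 3.10}), as stated in the paragraphs preceding the Fact, rather than from (\ref{eq:eq2-11}), which is itself deduced from that identification.
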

Thus $\{{\mathbf f}_1,\cdots, {\mathbf f}_{n-1}\}$ is an orthonormal basis of $X_0\otimes{\mathbb R}$ and 
 there are normalized Hecke eigenforms $\{f_1,\cdots,f_{n-1}\}$ such that
\[S_2(\Gamma_0(N))=\oplus _{i=1}^{n-1}{\mathbb C}f_i\]
and
\[T(f_i)=\alpha_i(T)f_i,\quad \forall T\in {\mathbb T}_0.\]
Set $\alpha_n=\sigma$ and
\[{\mathbf f}_n={\mathbf f},\quad f_n=F.\]
Then ${\mathbf f}_n$ (resp $f_n$) is a Hecke eigenvector (resp.  eigenform) of character $\alpha_n$, 
%
and we have real characters $\{\alpha_1,\cdots,\alpha_n\}$ of ${\mathbb T}$ which are also the characters of ${\mathbb T}_0(N)$ via the isomorphism ${\mathbb T}\otimes{\mathbb Q}\simeq {\mathbb T}_0(N)\otimes{\mathbb Q}$. 
As we have seen before $\{\alpha_1,\cdots,\alpha_n\}$ are mutually different, hence the corresponding set of eigenvectors $\{{\mathbf f}_1,\cdots, {\mathbf f}_n\}$ form an orthonormal basis of $X\otimes{\mathbb R}$. We summarize these results.
\begin{prop} (cf. \cite{Emerton} {\bf Theorem 3.1}, {\bf Corollary 3.2})
There is an isomorphism of  ${\mathbb T}\otimes{\mathbb C}$-modules
\[X\otimes{\mathbb C}=\oplus _{i=1}^{n}{\mathbb C}{\mathbf f}_i\simeq M_2(\Gamma_0(N))=\oplus _{i=1}^{n}{\mathbb C}f_i,\]
defined by
\[ {\mathbf f}_i \mapsto f_i.\]
Here $\{{\mathbf f}_1,\cdots, {\mathbf f}_n\}$ is an orthonormal basis of $X\otimes{\mathbb R}$ satisfying
\[T({\mathbf f}_i)=\alpha_i(T){\mathbf f}_i,\]
and $f_i$ is the normalized Hecke eigenform of the character $\alpha_i$. Moreover, $\{\alpha_1,\cdots,\alpha_n\}$ are mutually different real characters.
\end{prop}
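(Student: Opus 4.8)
The plan is to glue the two ${\mathbb T}$-stable orthogonal pieces of $X\otimes{\mathbb Q}$ already analysed — the cuspidal part $X_0\otimes{\mathbb Q}$ and the Eisenstein line ${\mathbb Q}{\mathbf b}$ — onto the corresponding pieces $S_2(\Gamma_0(N))$ and ${\mathbb C}F$ of $M_2(\Gamma_0(N))$, and then to observe that the gluing is forced once one knows the Hecke algebras are the same and the eigencharacters are pairwise distinct. Concretely, on the arithmetic side the spectral decomposition $(\ref{eq:eq2-12})$ of $X_0\otimes{\mathbb R}$ together with the orthogonal line ${\mathbb R}{\mathbf f}_n={\mathbb R}{\mathbf f}$ produces the orthonormal basis $\{{\mathbf f}_1,\dots,{\mathbf f}_n\}$ of $X\otimes{\mathbb R}$ with $T({\mathbf f}_i)=\alpha_i(T){\mathbf f}_i$, where $\alpha_n=\sigma$ by $(\ref{eq:eq2-9})$; on the modular side, $(\ref{eq:eq2-4})$ together with the eigenform decomposition $S_2(\Gamma_0(N))=\bigoplus_{i=1}^{n-1}{\mathbb C}f_i$ and $(\ref{eq:eq2-5})$ exhibits $M_2(\Gamma_0(N))=\bigoplus_{i=1}^{n}{\mathbb C}f_i$ as a sum of one-dimensional ${\mathbb T}_0(N)$-eigenspaces with $f_n=F$ of character $\sigma$. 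Thus both $X\otimes{\mathbb C}$ and $M_2(\Gamma_0(N))$ are free of rank one over the respective semisimple algebras ${\mathbb T}\otimes{\mathbb C}$ and ${\mathbb T}_0(N)\otimes{\mathbb C}$.

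Next I would match the characters. By the isomorphism ${\mathbb T}\otimes{\mathbb Q}\simeq{\mathbb T}_0(N)\otimes{\mathbb Q}$ of $(\ref{eq:eq2-11})$ and $(\ref{eq:eq2-7})$, a character of ${\mathbb T}$ is the same datum as a character of ${\mathbb T}_0(N)$. For $1\le i\le n-1$, {\bf Fact 2.1} identifies $X_0\otimes{\mathbb C}$ with $S_2(\Gamma_0(N))$ as ${\mathbb T}_0\simeq{\mathbb T}^{c}_0(N)$-modules, so the cuspidal characters on the two sides agree; the two Eisenstein lines ${\mathbb C}{\mathbf f}_n$ and ${\mathbb C}F$ both carry $\sigma$, which is real since $\sigma(m)_N\in{\mathbb Z}$. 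Finally, the $n$ characters $\{\alpha_1,\dots,\alpha_n\}$ are pairwise distinct: the $\alpha_i$ with $1\le i\le n-1$ are mutually distinct by the multiplicity one theorem, and $\alpha_n=\sigma$ is distinct from all of them because $|\sigma(T_p)|=1+p>2\sqrt{p}\ge|\alpha_i(T_p)|$ for every prime $p\neq N$ and $1\le i\le n-1$.

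Since a commutative algebra acting on a vector space with pairwise distinct eigencharacters has canonical one-dimensional eigenlines, the map sending the $\alpha_i$-eigenline ${\mathbb C}{\mathbf f}_i$ of $X\otimes{\mathbb C}$ to the $\alpha_i$-eigenline ${\mathbb C}f_i$ of $M_2(\Gamma_0(N))$ is well defined, ${\mathbb T}\otimes{\mathbb C}$-equivariant and bijective; the normalizations $||{\mathbf f}_i||=1$ and $a_1(f_i)=1$ fix the scalars, so ${\mathbf f}_i\mapsto f_i$ is the asserted isomorphism (the signs of the ${\mathbf f}_i$ playing no role in the module structure). The one step I would treat most carefully is the identification of the abstract Brandt-matrix action of ${\mathbb T}$ on $X$ with the Hecke action on $M_2(\Gamma_0(N))$, but this has already been reduced to Ribet's description of $X_0$ as the character group of the torus ${\mathcal T}$ and to the decomposition $(\ref{eq:eq2-7})$ splitting off the Eisenstein factor, so nothing new is needed beyond invoking {\bf Fact 2.1}, $(\ref{eq:eq2-11})$ and $(\ref{eq:eq2-7})$.
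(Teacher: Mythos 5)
Your proposal is correct and follows essentially the same route as the paper: the proposition is stated there as a summary of the preceding discussion, which likewise splits $X\otimes{\mathbb Q}$ into $X_0\otimes{\mathbb Q}$ plus the Eisenstein line, invokes \textbf{Fact 2.1} for the cuspidal part, uses $(\ref{eq:eq2-7})$ and $(\ref{eq:eq2-11})$ to identify the Hecke algebras, and separates $\sigma$ from the cuspidal characters via the bound $|\sigma(T_p)|=1+p>2\sqrt{p}$. The only cosmetic difference is your packaging of the conclusion as ``free of rank one over a semisimple algebra with distinct characters,'' which the paper leaves implicit.
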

We have a decomposition
\[{\mathbb T}\otimes{\mathbb C}\stackrel{\rho}\simeq {\mathbb C}^n\]
such that 
\[\alpha_i=\pi_i\circ \rho,\]
where $\pi_i$ is the $i$-th projection. 
We adopt $\{\alpha_1,\cdots,\alpha_n\}$ as a basis of ${\rm Hom}_{\mathbb C}({\mathbb T}\otimes{\mathbb C}, {\mathbb C})$ and define a linear isomorphism
\begin{equation}\label{eq:eq2-13}\mu : {\rm Hom}_{\mathbb C}({\mathbb T}\otimes{\mathbb C},{\mathbb C}) \simeq M_2(\Gamma_0(N))\end{equation}
by
\[\mu(\alpha_i)=f_i.\]
Note that since $M_2(\Gamma_0(N))\cap {\mathbb C}=0$, an element of $M_2(\Gamma_0(N))$ is determined by the Fourier expansion without a constant term. Thus we may write $f=\sum_{m=0}^{\infty}a_m(f)q^m \in M_2(\Gamma_0(N))$ by $\sum_{m=1}^{\infty}a_m(f)q^m$. For example
\begin{equation}\label{eq:eq2-14}f_n=F=\sum_{m=1}^{\infty}\sigma(m)_Nq^m, \quad \theta_{ij}=\sum_{m=1}^{\infty}B(m)_{ij}q^m.\end{equation}
Using this convention, (\ref{eq:eq2-12}) is described as
\[\mu(\alpha)=\sum_{m=1}^{\infty}\alpha(T_m)q^m,\quad \alpha=\sum_{i=1}^{n}a_i\alpha_i.\]
In fact, since $\mu(\alpha)=\mu(\sum_{i=1}^{n}a_i\alpha_i)=\sum_{i=1}^{n}a_if_i$, we have to verify 
\[\sum_{m=1}^{\infty}\alpha(T_m)q^m=\sum_{i=1}^{n}a_if_i,\]
which is easily checked
\[\sum_{m=1}^{\infty}\alpha(T_m)q^m=\sum_{m=1}^{\infty}\sum_{i=1}^{n}a_i\alpha_i(T_m)q^m=\sum_{i=1}^{n}a_i\sum_{m=1}^{\infty}\alpha_i(T_m)q^m=\sum_{i=1}^{n}a_if_i.\]
Define an action of  ${\mathbb T}$ on ${\rm Hom}_{\mathbb C}({\mathbb T}\otimes{\mathbb C}, {\mathbb C})$ by
\[(Tf)(t)=f(Tt),\quad f\in {\rm Hom}_{\mathbb C}({\mathbb T}\otimes{\mathbb C}, {\mathbb C}),\quad T\in {\mathbb T},\quad t \in {\mathbb T}\otimes{\mathbb C},\]
and one sees that $\mu$ commutes with the action of a Hecke operator.
Therefore we have shown the following result.
\begin{prop}
There is an isomorphism as ${\mathbb T}\otimes{\mathbb C}$-modules
\[\mu : {\rm Hom}_{\mathbb C}({\mathbb T}\otimes{\mathbb C},{\mathbb C}) \simeq M_2(\Gamma_0(N))\]
defined by
\[\mu(\alpha)=\sum_{m=1}^{\infty}\alpha(T_m)q^m.\]
\end{prop}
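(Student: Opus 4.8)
The plan is to verify the two things the statement asserts: that $\mu$ is a bijection of $\mathbb{C}$-vector spaces, and that it is $\mathbb{T}$-equivariant (equivalently $\mathbb{T}\otimes\mathbb{C}$-linear). For the first point I would observe that $\{\alpha_1,\dots,\alpha_n\}$ really is a $\mathbb{C}$-basis of ${\rm Hom}_{\mathbb C}(\mathbb{T}\otimes\mathbb{C},\mathbb{C})$: the $\alpha_i$ are $n$ mutually distinct characters of the commutative $n$-dimensional $\mathbb{C}$-algebra $\mathbb{T}\otimes\mathbb{C}$, so by independence of characters they are linearly independent, and the dual space has dimension $n$, so they form a basis. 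Since $\{f_1,\dots,f_n\}$ is a basis of $M_2(\Gamma_0(N))$ by Proposition 2.1, the linear map $\mu$ determined by $\mu(\alpha_i)=f_i$ is an isomorphism, and the closed formula $\mu(\alpha)=\sum_{m\ge1}\alpha(T_m)q^m$ is exactly the computation already carried out just above the statement, using $f_i=\sum_{m\ge1}\alpha_i(T_m)q^m$.

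For equivariance, since $\mathbb{T}$ is generated as a ring by the operators $T_p$ and $\mu$ is linear, it suffices to prove $\mu(T_p\alpha)=\mu(\alpha)\,|\,T_p$ for each prime $p$, and by linearity again it is enough to take $\alpha=\alpha_i$. The key observation is that each $\alpha_i$ is an algebra homomorphism, so the dual action is diagonalized by the $\alpha_i$: $(T_p\alpha_i)(t)=\alpha_i(T_pt)=\alpha_i(T_p)\alpha_i(t)$, i.e. $T_p\alpha_i=\alpha_i(T_p)\alpha_i$. On the modular side, $\mu(\alpha_i)=f_i$ is the normalized Hecke eigenform of character $\alpha_i$, so $f_i\,|\,T_p=\alpha_i(T_p)f_i$. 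Hence $\mu(T_p\alpha_i)=\alpha_i(T_p)\mu(\alpha_i)=\alpha_i(T_p)f_i=f_i\,|\,T_p=\mu(\alpha_i)\,|\,T_p$, which is what we want; the general case then follows by $\mathbb{C}$-linearity and by iterating over a generating set of Hecke operators.

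If one prefers to avoid the eigenbasis, the same conclusion follows directly from the $q$-expansion formula: $\mu(T_p\alpha)=\sum_{m\ge1}(T_p\alpha)(T_m)q^m=\sum_{m\ge1}\alpha(T_pT_m)q^m$, and one matches this with $\mu(\alpha)\,|\,T_p=\sum_{m\ge1}\big(a_{pm}(\mu(\alpha))+p\,a_{m/p}(\mu(\alpha))\big)q^m$ using the Hecke multiplication relations $T_pT_m=T_{pm}+pT_{m/p}$ for $p\neq N$ and $T_NT_m=T_{Nm}$ — the same bookkeeping that underlies the verification of the formula for $\mu$. I do not expect any genuine obstacle here: once Proposition 2.1 and the identification $\mathbb{T}\otimes\mathbb{Q}\simeq\mathbb{T}_0(N)\otimes\mathbb{Q}$ are available, the whole content is the formal compatibility of the eigenvalue pairing with the two Hecke actions. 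The only point needing a little care is that the abstract action on the dual and the geometric Hecke action on $M_2(\Gamma_0(N))$ are being matched through the same labelling $\alpha_i\leftrightarrow f_i$, which is legitimate precisely because $\alpha_i$ was defined as the character through which $\mathbb{T}$ acts on $f_i$.
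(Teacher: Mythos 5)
Your proposal is correct and follows essentially the same route as the paper: the paper also defines $\mu$ on the basis $\{\alpha_1,\dots,\alpha_n\}$ by $\mu(\alpha_i)=f_i$, verifies the closed formula $\mu(\alpha)=\sum_{m\geq 1}\alpha(T_m)q^m$ by the same expansion in the $\alpha_i$, and obtains equivariance from the dual action $(Tf)(t)=f(Tt)$ together with the eigenform property of the $f_i$. The only difference is that you spell out the details (independence of characters, the diagonalization $T_p\alpha_i=\alpha_i(T_p)\alpha_i$) that the paper leaves as ``one sees that $\mu$ commutes with the action of a Hecke operator.''
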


\section{A correspondence between the character group and the space of modular forms}
We extend the monodromy pairing to $X\otimes{\mathbb C}$ as a non-degenarate symmetric ${\mathbb C}$-bilinear pairing and denote the extension by the same symbol.
\begin{df} Fix $a\in X\otimes{\mathbb Q}$. Then we define the ${\mathbb Q}$-linear map
\[\phi_a : X\otimes{\mathbb Q} \to {\rm Hom}_{\mathbb C}({\mathbb T}\otimes{\mathbb C},{\mathbb C})\]
by
\[\phi_a(x)(T)=(a, Tx),\quad x\in X\otimes{\mathbb Q},\quad T\in {\mathbb T}\otimes{\mathbb C}.\]
\end{df}
It is clear that this map is also linear for $a$, and after a scalar extension to ${\mathbb C}$ we have a ${\mathbb C}$-linear map
\[\phi : (X\otimes{\mathbb C})\otimes_{\mathbb C}(X\otimes{\mathbb C}) \to {\rm Hom}_{\mathbb C}({\mathbb T}\otimes{\mathbb C},{\mathbb C}),\quad a\otimes x \mapsto \phi_a(x).\]
\begin{lm}
$\phi$ is surjective.
\end{lm}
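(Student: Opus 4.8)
The plan is to test $\phi$ against the orthonormal simultaneous eigenbasis $\{{\mathbf f}_1,\dots,{\mathbf f}_n\}$ of $X\otimes{\mathbb R}$ supplied by {\bf Proposition 2.1}, viewed inside $X\otimes{\mathbb C}$. Because the monodromy pairing is extended to $X\otimes{\mathbb C}$ as a symmetric ${\mathbb C}$-bilinear form and the ${\mathbf f}_i$ are real vectors, we still have $({\mathbf f}_i,{\mathbf f}_j)=\delta_{ij}$; likewise the eigenrelation $T{\mathbf f}_j=\alpha_j(T){\mathbf f}_j$ extends ${\mathbb C}$-linearly to every $T\in{\mathbb T}\otimes{\mathbb C}$, where $\alpha_j$ now denotes the ${\mathbb C}$-linear extension of the character.

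First I would compute, for $1\le i,j\le n$ and any $T\in{\mathbb T}\otimes{\mathbb C}$,
\[\phi_{{\mathbf f}_i}({\mathbf f}_j)(T)=({\mathbf f}_i,\,T{\mathbf f}_j)=\alpha_j(T)\,({\mathbf f}_i,{\mathbf f}_j)=\alpha_j(T)\,\delta_{ij}.\]
Hence $\phi_{{\mathbf f}_i}({\mathbf f}_i)=\alpha_i$ as an element of ${\rm Hom}_{\mathbb C}({\mathbb T}\otimes{\mathbb C},{\mathbb C})$, while $\phi_{{\mathbf f}_i}({\mathbf f}_j)=0$ when $i\neq j$. In particular every $\alpha_i$ lies in the image of $\phi$.

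Next I would recall from {\bf Proposition 2.1} that the characters $\{\alpha_1,\dots,\alpha_n\}$ are mutually distinct; distinct characters of a commutative algebra are linearly independent, so $\{\alpha_1,\dots,\alpha_n\}$ is a basis of the $n$-dimensional space ${\rm Hom}_{\mathbb C}({\mathbb T}\otimes{\mathbb C},{\mathbb C})$ (here $\dim_{\mathbb C}{\mathbb T}\otimes{\mathbb C}=n$ by \S 2). Since the image of $\phi$ is a linear subspace of the target containing a basis of it, $\phi$ is surjective.

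There is no real obstacle beyond bookkeeping: the only point requiring a word of care is that the passage from the positive-definite real monodromy pairing to its ${\mathbb C}$-bilinear extension preserves both the orthonormality of the ${\mathbf f}_i$ and the eigenvalue identities, so that the diagonalization above is literally valid over ${\mathbb C}$. One could instead argue through {\bf Proposition 2.2}, composing $x\mapsto\phi_a(x)$ with the isomorphism $\mu$ to land in $M_2(\Gamma_0(N))$ and identifying the images of the ${\mathbf f}_i$ with the eigenforms $f_i$; but the eigenbasis computation above is the most direct route and already pins down $\phi$ completely on basis tensors.
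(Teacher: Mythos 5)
Your proof is correct, but it is not the paper's argument. The paper disposes of the lemma in two lines of pure linear algebra: after identifying $X\otimes{\mathbb C}$ with its dual via the monodromy pairing, the source $(X\otimes{\mathbb C})\otimes(X\otimes{\mathbb C})$ becomes ${\rm End}_{\mathbb C}(X\otimes{\mathbb C})^{\ast}$, and $\phi$ is then literally the transpose of the inclusion ${\mathbb T}\otimes{\mathbb C}\hookrightarrow{\rm End}_{\mathbb C}(X\otimes{\mathbb C})$; the dual of an injection of finite-dimensional spaces is surjective, and the inclusion is injective by the very definition of ${\mathbb T}$ as a subalgebra of ${\rm End}_{\mathbb Z}(X)$. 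That route needs only the non-degeneracy of the pairing and the faithfulness of the ${\mathbb T}$-action, whereas yours invokes the full spectral decomposition, the multiplicity one theorem (to know the $\alpha_i$ are distinct), Dedekind's independence of characters, and the dimension count ${\rm dim}_{\mathbb C}{\mathbb T}\otimes{\mathbb C}=n$ from \S 2 --- all true and all available at this point in the paper, but considerably heavier inputs for a statement that is really just "transpose of injective is surjective." What your computation buys in exchange is explicit information: the identity $\phi_{{\mathbf f}_i}({\mathbf f}_j)=\delta_{ij}\alpha_j$ is essentially {\bf Lemma 3.3} in disguise (apply $\mu$ and expand $[i]$ in the eigenbasis), so your argument anticipates the diagonalization that the paper only carries out later when it actually needs it for {\bf Theorem 3.2} and {\bf Theorem 1.1}. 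One small remark: linear independence of the $\alpha_i$ alone does not give spanning; you correctly supply the missing ingredient with the dimension count, so no gap remains.
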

\begin{proof}
Identify $X\otimes{\mathbb C}$ with the dual $(X\otimes{\mathbb C})^{\ast}$ by the extension of the monodromy pairing. Writing
${\rm End}_{\mathbb C}(X\otimes{\mathbb C})=(X\otimes{\mathbb C})\otimes_{\mathbb C}(X\otimes{\mathbb C})^{\ast}$, the dual ${\rm End}_{\mathbb C}(X\otimes{\mathbb C})^{\ast}$ is isomorphic to $(X\otimes{\mathbb C})\otimes_{\mathbb C}(X\otimes{\mathbb C})$.
%
Now observe that $\phi$ is the dual of the natural embedding ${\mathbb T}\otimes{\mathbb C} \hookrightarrow {\rm End}_{\mathbb C}(X\otimes{\mathbb C})$ and the claim is proved.
\end{proof}
\begin{lm}
\[(\mu\phi)([i]\otimes[j])=\mu(\phi_{[i]}([j]))=w_i\theta_{ij}.\]
\end{lm}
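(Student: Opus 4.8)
The plan is to unwind both sides using the explicit description of $\mu$ from \textbf{Proposition 2.2} and the definition of the monodromy pairing \eqref{eq:eq1-1} together with the definition of the Brandt matrices \eqref{eq:eq1-3}. First I would recall that, by \textbf{Proposition 2.2}, for any $\alpha \in {\rm Hom}_{\mathbb C}({\mathbb T}\otimes{\mathbb C},{\mathbb C})$ one has $\mu(\alpha)=\sum_{m=1}^{\infty}\alpha(T_m)q^m$. Applying this to $\alpha=\phi_{[i]}([j])$ and using \textbf{Definition 3.1} gives
\[
\mu\big(\phi_{[i]}([j])\big)=\sum_{m=1}^{\infty}\phi_{[i]}([j])(T_m)\,q^m=\sum_{m=1}^{\infty}\big([i],\,T_m[j]\big)\,q^m .
\]

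The next step is to compute the coefficient $([i],T_m[j])$ for each $m\geq 1$. By the defining relation \eqref{eq:eq1-3} of the Brandt matrix, $T_m([j])=\sum_{k=1}^{n}B(m)_{kj}[k]$, so by bilinearity
\[
\big([i],\,T_m[j]\big)=\sum_{k=1}^{n}B(m)_{kj}\,\big([i],[k]\big).
\]
Now \eqref{eq:eq1-1} says $([i],[k])=w_i\delta_{ik}$, and only the term $k=i$ survives, yielding $([i],T_m[j])=w_iB(m)_{ij}$.

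Substituting this back and using the Fourier-expansion convention recorded in \eqref{eq:eq2-14}, namely $\theta_{ij}=\sum_{m=1}^{\infty}B(m)_{ij}q^m$, we obtain
\[
\mu\big(\phi_{[i]}([j])\big)=\sum_{m=1}^{\infty}w_iB(m)_{ij}\,q^m=w_i\sum_{m=1}^{\infty}B(m)_{ij}\,q^m=w_i\theta_{ij},
\]
which is exactly the claimed identity $(\mu\phi)([i]\otimes[j])=w_i\theta_{ij}$. The argument is essentially a formal computation; the only point that requires minor care is the compatibility of the two occurrences of the operator $T_m$ (the one appearing in \textbf{Proposition 2.2} and the one used to define $B(m)$ in the introduction), but this is precisely the content of the identification of the Hecke operators made in \S 2 and the remark following \eqref{eq:eq2-2} that (2.2) coincides with (1.3), so no genuine obstacle arises.
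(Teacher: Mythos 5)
Your computation is correct and is essentially identical to the paper's own proof: both expand $\mu(\phi_{[i]}([j]))$ via $\mu(\alpha)=\sum_{m\ge 1}\alpha(T_m)q^m$, write $T_m[j]=\sum_k B(m)_{kj}[k]$, and use $([i],[k])=w_i\delta_{ik}$ to isolate the term $w_iB(m)_{ij}$. No differences worth noting.
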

\begin{proof} The claim follows from a simple computation. Using the convention to write a modular form omitting a constant term, 
\begin{eqnarray*}
\mu(\phi_{[i]}([j]))&=& \sum_{m=1}^{\infty}\phi_{[i]}([j])(T_m)q^m=\sum_{m=1}^{\infty}([i],T_m[j])q^m\\
&=& \sum_{m=1}^{\infty}([i],\sum_{k=1}^{n}B(m)_{kj}[k])q^m=w_i\sum_{m=1}^{\infty}B(m)_{ij}q^m\\
&=& w_i\theta_{ij}.
\end{eqnarray*}
\end{proof}
Using {\bf Lemma 3.1} and {\bf Lemma 3.2}, {\bf Proposition 2.2} yields the following well-known fact.
\begin{thm} The set $\{\theta_{ij}\}_{1\leq i,j\leq n}$ spans $M_2(\Gamma_0(N))$.
\end{thm}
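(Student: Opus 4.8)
The plan is to deduce Theorem 3.1 directly from the surjectivity of $\phi$ (Lemma 3.1), the isomorphism $\mu$ (Proposition 2.2), and the explicit identification of $\mu\phi$ on the basis elements (Lemma 3.2). The point is that all the real work has already been done; what remains is essentially a diagram chase.

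First I would note that $\mu : {\rm Hom}_{\mathbb C}({\mathbb T}\otimes{\mathbb C},{\mathbb C}) \to M_2(\Gamma_0(N))$ is an isomorphism by Proposition 2.2, and $\phi : (X\otimes{\mathbb C})\otimes_{\mathbb C}(X\otimes{\mathbb C}) \to {\rm Hom}_{\mathbb C}({\mathbb T}\otimes{\mathbb C},{\mathbb C})$ is surjective by Lemma 3.1. Hence the composite $\mu\phi$ is a surjective ${\mathbb C}$-linear map onto $M_2(\Gamma_0(N))$. Therefore the image of $\mu\phi$ is all of $M_2(\Gamma_0(N))$, and since $\{[i]\otimes[j]\}_{1\le i,j\le n}$ is a ${\mathbb C}$-basis of the source $(X\otimes{\mathbb C})\otimes_{\mathbb C}(X\otimes{\mathbb C})$, the set $\{(\mu\phi)([i]\otimes[j])\}_{1\le i,j\le n}$ spans $M_2(\Gamma_0(N))$.

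Next I would invoke Lemma 3.2, which computes $(\mu\phi)([i]\otimes[j]) = w_i\theta_{ij}$. Since each $w_i$ is a nonzero scalar (it is a positive integer, being the order of a finite group), the set $\{w_i\theta_{ij}\}_{i,j}$ and the set $\{\theta_{ij}\}_{i,j}$ span the same ${\mathbb C}$-subspace. Combining with the previous paragraph, $\{\theta_{ij}\}_{1\le i,j\le n}$ spans $M_2(\Gamma_0(N))$, which is exactly the assertion of Theorem 3.1.

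There is no real obstacle here: the theorem is labelled ``well-known'' in the text, and the structural inputs (Lemma 3.1, Lemma 3.2, Proposition 2.2) are precisely arranged so that the conclusion is immediate. The only thing to be careful about is that spanning is preserved under the nonzero rescalings $\theta_{ij}\mapsto w_i\theta_{ij}$, and that $\{[i]\otimes[j]\}$ genuinely forms a basis of the tensor product rather than merely a spanning set — both of which are routine. One could equally phrase the argument as: every $f\in M_2(\Gamma_0(N))$ equals $\mu(\alpha)$ for a unique $\alpha$, this $\alpha$ lies in the image of $\phi$ since $\phi$ is onto, so $\alpha = \sum_{i,j} c_{ij}\,\phi_{[i]}([j])$ for scalars $c_{ij}$, and applying $\mu$ gives $f = \sum_{i,j} c_{ij} w_i \theta_{ij}$.
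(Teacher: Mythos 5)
Your argument is correct and is exactly the paper's own proof: the paper derives Theorem 3.1 by combining the surjectivity of $\phi$ (Lemma 3.1), the isomorphism $\mu$ of Proposition 2.2, and the identity $(\mu\phi)([i]\otimes[j])=w_i\theta_{ij}$ of Lemma 3.2, just as you do. The only detail you add beyond the paper's one-line deduction is the explicit remark that the nonzero scalars $w_i$ do not affect the span, which is a harmless elaboration.
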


\begin{df} We define a linear subspace $\Theta_{i}$ of $M_2(\Gamma_0(N))$ by
\[\Theta_i=\langle \theta_{i1},\cdots,\theta_{in}\rangle=\{\sum_{j=1}^{n}c_j\theta_{ij}\,|\, c_j\in {\mathbb C}\}.\] 
\end{df}
The symmetry of the monodromy paring implies (cf. (2.3)) 
\[\Theta_i=\langle \theta_{1i},\cdots,\theta_{ni}\rangle=\{\sum_{j=1}^{n}c_j\theta_{ji}\,|\, c_j\in {\mathbb C}\}.\] 
The following proposition is an immediate consequence of {\bf Lemma 3.2}.
\begin{prop} For any $1\leq i \leq n$,
\[\Theta_i = \mu({\rm Im}\phi_{[i]})\otimes{\mathbb C}.\]
\end{prop}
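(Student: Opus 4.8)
The plan is to unwind the definitions and reduce everything to \textbf{Lemma 3.2} together with the fact that $\mu$ is an isomorphism (\textbf{Proposition 2.2}).

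First I would note that $X\otimes{\mathbb Q}$ is spanned over ${\mathbb Q}$ by the classes $[1],\dots,[n]$, so the ${\mathbb Q}$-subspace ${\rm Im}\,\phi_{[i]}$ of ${\rm Hom}_{\mathbb C}({\mathbb T}\otimes{\mathbb C},{\mathbb C})$ is spanned over ${\mathbb Q}$ by the finite set $\{\phi_{[i]}([1]),\dots,\phi_{[i]}([n])\}$. Extending scalars, the ${\mathbb C}$-subspace ${\rm Im}\,\phi_{[i]}\otimes{\mathbb C}$ — which one may also describe as the image of the ${\mathbb C}$-linear extension of $\phi_{[i]}$, or equivalently of the restriction of the map $\phi$ of \textbf{Lemma 3.1} to $[i]\otimes(X\otimes{\mathbb C})$ — is spanned over ${\mathbb C}$ by the same finite set.

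Next, since $\mu:{\rm Hom}_{\mathbb C}({\mathbb T}\otimes{\mathbb C},{\mathbb C})\simeq M_2(\Gamma_0(N))$ is a ${\mathbb C}$-linear isomorphism, it carries the ${\mathbb C}$-span of $\{\phi_{[i]}([j])\}_{1\le j\le n}$ onto the ${\mathbb C}$-span of $\{\mu(\phi_{[i]}([j]))\}_{1\le j\le n}$. By \textbf{Lemma 3.2} one has $\mu(\phi_{[i]}([j]))=w_i\theta_{ij}$, and since $w_i$ is a nonzero integer the ${\mathbb C}$-span of the vectors $w_i\theta_{ij}$ ($1\le j\le n$) coincides with $\langle\theta_{i1},\dots,\theta_{in}\rangle=\Theta_i$. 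Stringing these identifications together yields $\mu({\rm Im}\,\phi_{[i]}\otimes{\mathbb C})=\Theta_i$, which is the assertion.

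There is no serious obstacle here: the statement is genuine bookkeeping once \textbf{Lemma 3.2} is in hand. The only step deserving a word of care is the interchange of the scalar extension $-\otimes_{\mathbb Q}{\mathbb C}$ with the formation of the image of $\phi_{[i]}$, but this is immediate from the exactness of $-\otimes_{\mathbb Q}{\mathbb C}$ over the field ${\mathbb Q}$, so it presents no real difficulty.
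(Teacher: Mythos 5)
Your argument is exactly the paper's intended one: the paper states the proposition as an immediate consequence of \textbf{Lemma 3.2}, and your unwinding --- ${\rm Im}\,\phi_{[i]}$ is spanned by the $\phi_{[i]}([j])$, $\mu$ sends these to $w_i\theta_{ij}$, and $w_i\neq 0$ --- is precisely that deduction, with the harmless extra care about commuting $-\otimes_{\mathbb Q}{\mathbb C}$ with taking the image. The proposal is correct and matches the paper's approach.
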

For brevity the extension of $\phi_{[i]}$ to an ${\mathbb R}$-linear map is denoted by the same symbol.
\begin{lm}
\[\mu\phi_{[i]}({\mathbf f}_j)=([i],{\mathbf f}_j)f_j.\]
\end{lm}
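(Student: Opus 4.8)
The plan is to compute $\mu\phi_{[i]}(\mathbf{f}_j)$ directly, exploiting that $\mathbf{f}_j$ is a simultaneous Hecke eigenvector. First I would unwind Definition 3.1 (applied to the $\mathbb{R}$-linear extension of $\phi_{[i]}$ flagged just before the statement): for $T\in\mathbb{T}\otimes\mathbb{C}$,
\[\phi_{[i]}(\mathbf{f}_j)(T)=([i],T\mathbf{f}_j).\]
By Proposition 2.1, $\mathbf{f}_j$ satisfies $T\mathbf{f}_j=\alpha_j(T)\mathbf{f}_j$ for all $T\in\mathbb{T}$, hence also for all $T\in\mathbb{T}\otimes\mathbb{C}$ by linearity. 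Since the extended monodromy pairing is bilinear, this gives
\[\phi_{[i]}(\mathbf{f}_j)(T)=\alpha_j(T)\,([i],\mathbf{f}_j),\]
i.e. as an element of ${\rm Hom}_{\mathbb{C}}(\mathbb{T}\otimes\mathbb{C},\mathbb{C})$ we have the identity $\phi_{[i]}(\mathbf{f}_j)=([i],\mathbf{f}_j)\,\alpha_j$, in which $([i],\mathbf{f}_j)\in\mathbb{R}$ is just a scalar.

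Next I would apply the linear isomorphism $\mu$ of \S 2. Because $\mu$ is $\mathbb{C}$-linear and $\mu(\alpha_j)=f_j$ by its very definition (equivalently, by Proposition 2.2, since $f_j=\sum_{m\geq 1}\alpha_j(T_m)q^m$), pulling the scalar through $\mu$ yields
\[\mu\phi_{[i]}(\mathbf{f}_j)=([i],\mathbf{f}_j)\,\mu(\alpha_j)=([i],\mathbf{f}_j)\,f_j,\]
which is exactly the claim.

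There is essentially no obstacle: the argument is a two-line computation whose only subtle points are (i) that the eigenvalue relation $T\mathbf{f}_j=\alpha_j(T)\mathbf{f}_j$ extends from $\mathbb{T}$ to the complexified Hecke algebra $\mathbb{T}\otimes\mathbb{C}$, which is immediate by $\mathbb{C}$-linearity, and (ii) that one works with the $\mathbb{R}$-linear (then $\mathbb{C}$-linear) extension of $\phi_{[i]}$ so that $\phi_{[i]}(\mathbf{f}_j)$ makes sense even though $\mathbf{f}_j\in X\otimes\mathbb{R}$. The content of the lemma is organizational rather than technical: it transports the spectral decomposition of Proposition 2.1 into the form $\mu({\rm Im}\,\phi_{[i]})$ described in Proposition 3.1, which is precisely what is needed to extract $\dim\Theta_i=|\Sigma(i)|$ in Theorem 1.1.
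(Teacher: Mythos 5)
Your proof is correct and is essentially the paper's argument: both reduce to the eigenvalue relation $T\mathbf{f}_j=\alpha_j(T)\mathbf{f}_j$, bilinearity of the pairing, and the identification $\mu(\alpha_j)=f_j$. The only cosmetic difference is that you identify $\phi_{[i]}(\mathbf{f}_j)=([i],\mathbf{f}_j)\alpha_j$ as a functional before applying $\mu$, whereas the paper carries out the same computation term by term on the $q$-expansion $\mu(\alpha)=\sum_{m\geq 1}\alpha(T_m)q^m$.
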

\begin{proof}
\begin{eqnarray*}
\mu\phi_{[i]}({\mathbf f}_j)&=& \sum_{m=1}^{\infty}\phi_{[i]}({\mathbf f}_j)(T_m)q^m=\sum_{m=1}^{\infty}([i],T_m{\mathbf f}_j)q^m\\
&=& \sum_{m=1}^{\infty}([i],\alpha_j(T_m){\mathbf f}_j)q^m=([i],{\mathbf f}_j)\sum_{m=1}^{\infty}\alpha_j(T_m)q^m\\
&=& ([i],{\mathbf f}_j)f_j.
\end{eqnarray*}
\end{proof}
{\bf Lemma 3.2} and {\bf Lemma 3.3} imply the following theorem.
\begin{thm}
\begin{enumerate}[\upshape(1)]
\item  Let us write the eigenvector ${\mathbf f}_j$ by
\[{\mathbf f}_j=\sum_{k=1}^{n}c_{jk}[k].\]
Then
\[([i], {\mathbf f}_j)f_j=w_i\sum_{k=1}^{n}c_{jk}\theta_{ik}.\]
%
\item 
\[w_j\theta_{ji}=w_i\theta_{ij}=\sum_{k\in\Sigma(i)\cap \Sigma(j)}([j], {\mathbf f}_k)([i], {\mathbf f}_k)f_k.\]
\end{enumerate}
\end{thm}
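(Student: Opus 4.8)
The plan is to derive both parts of Theorem~3.4 directly from the two lemmas already in hand, namely Lemma~3.2 (which computes $\mu\phi_{[i]}([j]) = w_i\theta_{ij}$) and Lemma~3.3 (which computes $\mu\phi_{[i]}(\mathbf{f}_j) = ([i],\mathbf{f}_j)f_j$). For part (1), I would expand $\mathbf{f}_j = \sum_k c_{jk}[k]$ and apply $\mu\phi_{[i]}$, which is an ${\mathbb R}$-linear map. On one hand linearity gives $\mu\phi_{[i]}(\mathbf{f}_j) = \sum_k c_{jk}\,\mu\phi_{[i]}([k]) = w_i\sum_k c_{jk}\theta_{ik}$ by Lemma~3.2; on the other hand Lemma~3.3 gives $\mu\phi_{[i]}(\mathbf{f}_j) = ([i],\mathbf{f}_j)f_j$. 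Equating the two expressions yields the identity in part (1). This is essentially a one-line argument.

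For part (2), the key is to express the eigenform expansion of $\theta_{ij}$ by decomposing $[i]$ (or $[j]$) in the orthonormal eigenbasis $\{\mathbf{f}_1,\dots,\mathbf{f}_n\}$ of $X\otimes{\mathbb R}$. Write $[i] = \sum_k ([i],\mathbf{f}_k)\mathbf{f}_k$ and apply the ${\mathbb R}$-linear map $\mu\phi_{[\,\cdot\,]}$ in its first slot — that is, use the fact noted after Definition~3.1 that $\phi_a(x)$ is linear in $a$, so $\mu\phi_{[i]}([j])$ is linear in the $[i]$-entry as well. Then $w_i\theta_{ij} = \mu\phi_{[i]}([j]) = \sum_k ([i],\mathbf{f}_k)\,\mu\phi_{\mathbf{f}_k}([j])$. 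Now I need a companion to Lemma~3.3 computing $\mu\phi_{\mathbf{f}_k}([j])$; but by the same computation as in Lemma~3.3 (using self-adjointness of $T_m$ to move the operator onto $\mathbf{f}_k$, i.e. $(\mathbf{f}_k, T_m[j]) = (T_m\mathbf{f}_k,[j]) = \alpha_k(T_m)(\mathbf{f}_k,[j])$), one gets $\mu\phi_{\mathbf{f}_k}([j]) = ([j],\mathbf{f}_k)f_k$. Substituting gives $w_i\theta_{ij} = \sum_{k=1}^n ([i],\mathbf{f}_k)([j],\mathbf{f}_k)f_k$, and the terms with $([i],\mathbf{f}_k)=0$ or $([j],\mathbf{f}_k)=0$ vanish, so the sum is over $k\in\Sigma(i)\cap\Sigma(j)$. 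The equality $w_j\theta_{ji}=w_i\theta_{ij}$ is just (2.3) (symmetry of the Brandt matrix weighted by $w$), already recorded, and it is also visibly consistent with the symmetry of the final expression in $i$ and $j$.

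The main (and really only) obstacle is bookkeeping: making sure the self-adjointness step $(Tx,y)=(x,Ty)$ from (2.10) is invoked correctly so that $\mu\phi$ can be computed with the Hecke operator acting on either argument, and being careful that $\mu\phi_{[i]}(\mathbf{f}_j)$ and $\mu\phi_{\mathbf{f}_k}([j])$ are genuinely the $\mathbb R$-linear (indeed $\mathbb C$-linear after extension) maps one expects — this is immediate from the bilinearity of the monodromy pairing and the linearity of $\mu$, both already established. No new ideas are needed; the proof is a direct assembly of Lemma~3.2, Lemma~3.3, the symmetry relation (2.3)/(2.10), and the orthonormal expansion of a lattice vector in the eigenbasis. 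I would write it as a short two-step computation, first doing part (1) and then part (2), with the remark that part (1) summed over an orthonormal decomposition of $[j]$ gives part (2).
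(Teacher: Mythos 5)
Your proposal is correct and follows essentially the same route as the paper: part (1) is the identical one-line combination of Lemma~3.2 and Lemma~3.3 via linearity, and part (2) is the same orthonormal-expansion argument, except that you expand $[i]$ in the first slot of $\phi$ (requiring the companion identity $\mu\phi_{{\mathbf f}_k}([j])=([j],{\mathbf f}_k)f_k$ via self-adjointness (2.10)) whereas the paper expands $[j]$ in the second slot and applies Lemma~3.3 directly. Since symmetry of the pairing and (2.10) give $\phi_a(x)=\phi_x(a)$, the two variants coincide, and your handling of the restriction to $\Sigma(i)\cap\Sigma(j)$ and of $w_j\theta_{ji}=w_i\theta_{ij}$ via (2.3) matches the paper.
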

\begin{proof}
A simple computation shows the claims. In fact
\begin{eqnarray*}
([i],{\mathbf f}_j)f_j&=&\mu\phi_{[i]}({\mathbf f}_j)\\
&=& \mu\phi_{[i]}(\sum_{k=1}^{n}c_{jk}[k])=\sum_{k=1}^{n}c_{jk}\cdot \mu\phi_{[i]}([k])\\
&=& w_i\sum_{k=1}^{n}c_{jk}\theta_{ik},
\end{eqnarray*}
which implies (1). We will show (2). Since $\{{\mathbf f}_1,\cdots,{\mathbf f}_n\}$ is an orthonormal basis of $X\otimes{\mathbb R}$ for the monodromy paring, 
\[[j]=\sum_{k=1}^{n}([j],{\mathbf f}_k){\mathbf f}_k=\sum_{k\in \Sigma(j)}([j],{\mathbf f}_k){\mathbf f}_k\]
and a computation using {\bf Lemma 3.2} and {\bf Lemma 3.3} yields
\begin{eqnarray*}
w_i\theta_{ij} &=&\mu\phi_{[i]}([j]) \\
&=& \sum_{k\in \Sigma(j)}([j],{\mathbf f}_k)\mu\phi_{[i]}({\mathbf f}_k)\\
&=& \sum_{k\in \Sigma(j)}([j],{\mathbf f}_k)([i],{\mathbf f}_k)f_k=\sum_{k\in\Sigma(i)\cap \Sigma(j)}([j], {\mathbf f}_k)([i], {\mathbf f}_k)f_k.
\end{eqnarray*}
\end{proof}
\begin{lm} Let $x$ be an element of $X\otimes{\mathbb C}$. Then $\partial(x)=0$ if and only if $(x,{\mathbf f}_n)=0$.
\end{lm}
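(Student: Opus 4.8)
The plan is to exploit the exact sequence (\ref{eq:eq2-8}) together with the spectral decomposition of $X\otimes{\mathbb R}$. Recall that $X_0$ is by definition the kernel of $\partial$, so for $x\in X\otimes{\mathbb C}$ the condition $\partial(x)=0$ is equivalent to $x\in X_0\otimes{\mathbb C}$. On the other hand, the orthogonal decomposition $X\otimes{\mathbb Q}=(X_0\otimes{\mathbb Q})\,\hat\oplus\,{\mathbb Q}{\mathbf b}$ constructed just before Remark 2.3, extended to ${\mathbb C}$, shows that $X_0\otimes{\mathbb C}$ is exactly the orthogonal complement of ${\mathbf b}$, hence of ${\mathbf f}={\mathbf f}_n$, for the (${\mathbb C}$-bilinear extension of the) monodromy pairing. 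So the statement is essentially a restatement of the fact that $X_0\otimes{\mathbb C}=({\mathbb C}{\mathbf f}_n)^{\perp}$.

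Concretely, I would argue as follows. First, since $\{{\mathbf f}_1,\dots,{\mathbf f}_n\}$ is an orthonormal basis of $X\otimes{\mathbb R}$ and (\ref{eq:eq2-12}) gives $X_0\otimes{\mathbb R}=\oplus_{j=1}^{n-1}{\mathbb R}{\mathbf f}_j$ with ${\mathbf f}_n\perp X_0\otimes{\mathbb R}$, the same relations hold after tensoring with ${\mathbb C}$: $X_0\otimes{\mathbb C}=\oplus_{j=1}^{n-1}{\mathbb C}{\mathbf f}_j$ and $({\mathbf f}_j,{\mathbf f}_n)=\delta_{jn}$. Now write a general $x\in X\otimes{\mathbb C}$ as $x=\sum_{j=1}^{n}(x,{\mathbf f}_j){\mathbf f}_j$ (legitimate because the ${\mathbf f}_j$ form an orthonormal basis for the ${\mathbb C}$-bilinear form). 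Then $(x,{\mathbf f}_n)=0$ if and only if the ${\mathbf f}_n$-component of $x$ vanishes, i.e. $x\in\oplus_{j=1}^{n-1}{\mathbb C}{\mathbf f}_j=X_0\otimes{\mathbb C}=\ker\partial$. This gives both implications at once.

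The only point needing a little care — and the step I would flag as the main (minor) obstacle — is that the monodromy pairing is extended to $X\otimes{\mathbb C}$ as a \emph{${\mathbb C}$-bilinear} (not Hermitian) form, so one must check that an ${\mathbb R}$-orthonormal basis of $X\otimes{\mathbb R}$ remains "orthonormal" for this bilinear extension and that one may still expand $x=\sum_j(x,{\mathbf f}_j){\mathbf f}_j$; this is immediate from ${\mathbb C}$-bilinearity applied to the expansion $x=\sum_j\lambda_j{\mathbf f}_j$ with $\lambda_j\in{\mathbb C}$, since then $(x,{\mathbf f}_k)=\sum_j\lambda_j({\mathbf f}_j,{\mathbf f}_k)=\lambda_k$. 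One also uses that $\partial$ is defined over ${\mathbb Q}$ so that $\ker(\partial\otimes{\mathbb C})=(\ker\partial)\otimes{\mathbb C}=X_0\otimes{\mathbb C}$, which follows from flatness of ${\mathbb C}$ over ${\mathbb Q}$ (or from the explicit description of $X_0$). With these remarks in place the proof is a two-line computation.
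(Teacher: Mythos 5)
Your argument is correct and is essentially the paper's own proof: both rest on the orthogonal decomposition $X\otimes{\mathbb C}=(X_0\otimes{\mathbb C})\,\hat\oplus\,{\mathbb C}{\mathbf f}_n$ together with $X_0=\ker\partial$. The extra care you take with the ${\mathbb C}$-bilinear (non-Hermitian) extension and the expansion $x=\sum_j(x,{\mathbf f}_j){\mathbf f}_j$ merely makes explicit what the paper leaves implicit.
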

\begin{proof}
By {\bf Proposition 2.1} and (\ref{eq:eq2-12}) there is an orthogonal decomposition
\[X\otimes{\mathbb C}=(X_{0}\otimes{\mathbb C})\hat{\oplus}{\mathbb C}{\mathbf f}_n.\]
We obtain the claim because $X_0={\rm Ker}\partial$.
\end{proof}
\begin{thm} For an arbitrary $1\leq i \leq n$, ${\rm Ker}\phi_{[i]}$ is a linear subspace of $X\otimes{\mathbb Q}$ which is stable by the action of ${\mathbb T}$.  After scalar extension to ${\mathbb R}$, it has a spectral decomposition
\[{\rm Ker}\phi_{[i]}\otimes{\mathbb R}=\oplus_{\tau \in \Sigma^{\prime}(i)}{\mathbb R}{\mathbf f}_{\tau},\]
where $\Sigma^{\prime}(i)$ is the complement of $\Sigma(i)$ ; $\Sigma^{\prime}(i)=\{\tau \,|\, ([i], {\mathbf f}_{\tau})=0\}$. Moreover, $\Sigma^{\prime}(i)$ is contained in $\{1,\cdots,n-1\}$.
\end{thm}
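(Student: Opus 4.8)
The plan is to establish the three assertions of the theorem in turn, each one reducing to material already in place.

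First I would verify that ${\rm Ker}\,\phi_{[i]}$ is a ${\mathbb T}$-stable ${\mathbb Q}$-subspace of $X\otimes{\mathbb Q}$. Linearity over ${\mathbb Q}$ is immediate from {\bf Definition 3.1}, since $\phi_{[i]}$ is ${\mathbb Q}$-linear. For stability, suppose $x\in{\rm Ker}\,\phi_{[i]}$, i.e. $([i],Tx)=0$ for all $T\in{\mathbb T}\otimes{\mathbb C}$, and let $S\in{\mathbb T}$. Then $\phi_{[i]}(Sx)(T)=([i],TSx)=([i],(TS)x)=0$ because $TS\in{\mathbb T}\otimes{\mathbb C}$, so $Sx\in{\rm Ker}\,\phi_{[i]}$. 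This uses only that ${\mathbb T}$ is a subalgebra of ${\rm End}_{\mathbb Z}(X)$.

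Next, for the spectral decomposition I would extend scalars to ${\mathbb R}$. By {\bf Proposition 2.1} the operators in ${\mathbb T}\otimes{\mathbb R}$ are simultaneously diagonalized on $X\otimes{\mathbb R}=\oplus_{j=1}^{n}{\mathbb R}{\mathbf f}_j$, with the characters $\alpha_j$ mutually distinct; hence every ${\mathbb T}\otimes{\mathbb R}$-stable subspace of $X\otimes{\mathbb R}$ is the direct sum of a subset of the one-dimensional eigenlines ${\mathbb R}{\mathbf f}_j$. Since ${\rm Ker}\,\phi_{[i]}\otimes{\mathbb R}$ is such a subspace by the previous paragraph, it suffices to decide for which $j$ one has ${\mathbf f}_j\in{\rm Ker}\,\phi_{[i]}$. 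By {\bf Lemma 3.3}, $\mu\phi_{[i]}({\mathbf f}_j)=([i],{\mathbf f}_j)f_j$, and since $\mu$ is the isomorphism of {\bf Proposition 2.2} and $f_j\neq 0$, this vanishes precisely when $([i],{\mathbf f}_j)=0$, i.e. precisely when $j\in\Sigma^{\prime}(i)$. This yields ${\rm Ker}\,\phi_{[i]}\otimes{\mathbb R}=\oplus_{\tau\in\Sigma^{\prime}(i)}{\mathbb R}{\mathbf f}_{\tau}$.

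Finally, to see $\Sigma^{\prime}(i)\subseteq\{1,\cdots,n-1\}$, equivalently $n\notin\Sigma^{\prime}(i)$, I would invoke {\bf Lemma 3.4}: $({\mathbf f}_n,x)=0$ if and only if $\partial(x)=0$. Since $\partial([i])=\epsilon\neq 0$ by (\ref{eq:eq2-8}), it follows that $([i],{\mathbf f}_n)\neq 0$, so $n\in\Sigma(i)$ and hence $n\notin\Sigma^{\prime}(i)$. There is no serious obstacle here; the only point needing a little care is the structural observation that a ${\mathbb T}\otimes{\mathbb R}$-stable subspace of $X\otimes{\mathbb R}$ is automatically a sum of the common eigenlines ${\mathbb R}{\mathbf f}_j$, which rests on the multiplicity-one statement (distinctness of the $\alpha_j$) recorded in {\bf Fact 2.1} and {\bf Proposition 2.1}. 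Everything else is a direct application of {\bf Lemma 3.3} and {\bf Lemma 3.4}.
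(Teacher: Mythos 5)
Your proof is correct and takes essentially the same route as the paper's: a ${\mathbb T}$-stable subspace decomposes into eigenlines because the $\alpha_j$ are distinct, membership of ${\mathbf f}_\tau$ in the kernel is detected by $([i],{\mathbf f}_\tau)$, and $n\notin\Sigma'(i)$ follows from {\bf Lemma 3.4} and $\partial([i])=\epsilon\neq 0$. The only (harmless) variation is your stability argument $\phi_{[i]}(Sx)(T)=([i],(TS)x)$, which is precisely the alternative the paper records in {\bf Remark 3.1}, whereas the proof in the text instead uses self-adjointness to identify ${\rm Ker}\,\phi_{[i]}$ with $({\mathbb T}[i]\otimes{\mathbb Q})^{\perp}$.
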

\begin{proof}
Remember that the action of $T\in {\mathbb T}$ on $X$ is symmetric for the monodromy pairing. Then by definition
\[\phi_{[i]}(x)(T)=([i], Tx)=(T[i],x),\quad T\in {\mathbb T},\quad x\in X\otimes{\mathbb Q}\]
and ${\rm Ker}\phi_{[i]}$ is equal to the orthogonal complement of ${\mathbb T}[i]\otimes{\mathbb Q}$ :
\[({\mathbb T}[i]\otimes{\mathbb Q})^{\perp}=\{x\in X\otimes{\mathbb Q}\,|\, (x, y)=0\quad  \forall y\in {\mathbb T}[i]\otimes{\mathbb Q}\}.\]
Since ${\mathbb T}[i]\otimes{\mathbb Q}$ is ${\mathbb T}$-stable so is 
${\rm Ker}\phi_{[i]}=({\mathbb T}[i]\otimes{\mathbb Q})^{\perp}$. Hence after scalar extension to ${\mathbb R}$, it admits a spectral decomposition
\[{\rm Ker}\phi_{[i]}\otimes{\mathbb R}=\oplus_{\tau \in \Sigma}{\mathbb R}{\mathbf f}_{\tau},\quad \Sigma\subset \{1,\cdots,n\}.\]
We determine the index set $\Sigma$. The computation
\[\phi_{[i]}({\mathbf f}_{\tau})(T)=([i], T{\mathbf f}_{\tau})=\alpha_{\tau}(T)([i],{\mathbf f}_{\tau}),\quad \forall T\in{\mathbb T},\]
shows that $\phi_{[i]}({\mathbf f}_{\tau})=0$ is equivalent to $([i],{\mathbf f}_{\tau})=0$. Thus we see 
\[\Sigma=\Sigma^{\prime}(i).\] 
Finally let us show that $n$ is not contained in $\Sigma^{\prime}(i)$. By {\bf Lemma 3.4} it is sufficient to show that $\partial([i])\neq 0$ but this is clear since 
\[\partial([i])=\epsilon\neq 0.\]
\end{proof}
\begin{remark}
There is an another way to show that ${\rm Ker}\phi_{[i]}$ is stable under the action of ${\mathbb T}$. Remember that the ${\mathbb T}$-module structure on ${\rm Hom}_{\mathbb C}({\mathbb T}\otimes{\mathbb C},{\mathbb C})$ is defined by
\[(Tf)(t):=f(Tt),\quad f\in {\rm Hom}_{\mathbb C}({\mathbb T}\otimes{\mathbb C},{\mathbb C}),\quad T\in {\mathbb T},\quad t\in {\mathbb T}\otimes{\mathbb C}.\]
Then it is easy to check that 
\[\phi_{[i]} : X\otimes{\mathbb Q} \to {\rm Hom}_{\mathbb C}({\mathbb T}\otimes{\mathbb C},{\mathbb C}),\quad \phi_{[i]}(x)(t)=([i],tx)\]
commutes with the action of ${\mathbb T}$. In fact, the computation
\[[\phi_{[i]}(Tx)](t)=([i],t(Tx))=([i], (Tt)x)=\phi_{[i]}(x)(Tt)=[(T\phi_{[i]})(x)](t),\]
shows that $\phi_{[i]}$ commutes with $\forall T \in {\mathbb T}$ and ${\rm Ker}\phi_{[i]}$ is stable by ${\mathbb T}$. 
\end{remark}
Now we finish the proof of {\bf Theorem 1.1}.\\

\noindent{\bf Proof of Theorem 1.1.} By {\bf Proposition 2.1}, 
\[X\otimes{\mathbb C}=\oplus_{i=1}^{n}{\mathbb C}{\mathbf f}_i.\]
We extend $\phi_{[i]}$ to a ${\mathbb C}$-linear map. Then {\bf Proposition 3.1}, {\bf Theorem 3.3} and {\bf Lemma 3.3} imply
\[\Theta_i=\mu\phi_{[i]}(X\otimes{\mathbb C})=\mu\phi_{[i]}(\oplus_{\kappa \in \Sigma(i)}{\mathbb C}{\mathbf f}_{\kappa})=\oplus_{\kappa \in \Sigma(i)}{\mathbb C}f_{\kappa}.\]
\begin{flushright}
$\Box$
\end{flushright}

Remember that $[i]$ denotes the supersingular elliptic curve $E_i$ and 
\[T_N(E_i)=E_i^{F}\]
where $F$ is the $N$-th power Frobenius. Since every supersingular elliptic curve is  defined over ${\mathbb F}_{N^2}$, $B(N)$ is a permutation matrix of order dividing $2$ and the eigenvalues are $\pm 1$. In particular $B(N)_{ii}=1$ if and only if $E_i$ is defined over the prime field ${\mathbb F}_N$ (cf. \cite{Gross} {\bf Proposition 2.4}). Suppose that $T_N({\mathbf f}_{\tau})=-{\mathbf f}_{\tau}$ and let $E_i$ be defined over ${\mathbb F}_N$. Then writing ${\mathbf f}_{\tau}=\sum_{i=1}^nf_{i\tau}[i]$ we see that $f_{i\tau}=0$. Since the Atkin-Lehner involution $w_N$ is related to $T_N$ by 
\[w_N=-T_N\]
(\cite{Ribet1990} {\bf Proposition 3.7}), we see
\[\{\tau\,|\, w_N {\mathbf f}_{\tau}={\mathbf f}_{\tau}\}=\{\tau\,|\, T_N{\mathbf f}_{\tau}=-{\mathbf f}_{\tau}\}\subset \Sigma^{\prime}(i).\]
These arguments yield the following result.


\begin{thm}
Let $\rho$ be the number of normalized Hecke eigenforms of which the sign of the Atkin-Lehner involution is $+1$. Suppose that $E_i$ is defined over the prime field ${\mathbb F}_N$. Then
\[n-{\rm dim}\Theta_{i} \geq \rho.\]
\end{thm}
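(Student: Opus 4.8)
The strategy is to combine the characterization of $\dim\Theta_i$ from Theorem 1.1 with the observation, already developed in the excerpt, that eigenforms on which the Atkin--Lehner involution $w_N$ acts by $+1$ contribute to $\Sigma'(i)$ whenever $E_i$ is defined over $\mathbb{F}_N$. Recall from Theorem 1.1 that $\dim\Theta_i = |\Sigma(i)|$, hence $n - \dim\Theta_i = |\Sigma'(i)|$, where $\Sigma'(i) = \{\tau : ([i],\mathbf{f}_\tau) = 0\}$. So it suffices to exhibit at least $\rho$ indices $\tau$ lying in $\Sigma'(i)$.

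First I would set up the relation $w_N = -T_N$ on $X$ (via \cite{Ribet1990} Proposition 3.7, as quoted in the excerpt), so that the set of eigenvectors $\mathbf{f}_\tau$ with $w_N\mathbf{f}_\tau = \mathbf{f}_\tau$ coincides with those satisfying $T_N\mathbf{f}_\tau = -\mathbf{f}_\tau$; by the multiplicity one theorem and the $\mathbb{T}\otimes\mathbb{Q}\simeq\mathbb{T}_0(N)\otimes\mathbb{Q}$ identification, there are exactly $\rho$ such indices $\tau$ among $\{1,\dots,n-1\}$ (the Eisenstein eigenvector $\mathbf{f}_n$ has $w_N\mathbf{f}_n = -\mathbf{f}_n$ since $T_N(\epsilon) = \epsilon$, so it is excluded, which is consistent since $n\notin\Sigma'(i)$ by Theorem 3.3). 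Next I would prove the key inclusion: if $T_N\mathbf{f}_\tau = -\mathbf{f}_\tau$ and $E_i$ is defined over $\mathbb{F}_N$, then $([i],\mathbf{f}_\tau) = 0$. Writing $\mathbf{f}_\tau = \sum_{k=1}^n f_{k\tau}[k]$, the monodromy pairing gives $([i],\mathbf{f}_\tau) = w_i f_{i\tau}$, so it is enough to show $f_{i\tau} = 0$. For this I would use that $B(N)$ is the permutation matrix of the Frobenius action on $\{E_1,\dots,E_n\}$, with $B(N)_{ii} = 1$ precisely when $E_i$ is defined over $\mathbb{F}_N$ (\cite{Gross} Proposition 2.4). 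The eigenvalue equation $T_N\mathbf{f}_\tau = -\mathbf{f}_\tau$, read off the $i$-th coordinate, reads $\sum_k B(N)_{ik} f_{k\tau} = -f_{i\tau}$; since $E_i$ is defined over $\mathbb{F}_N$, the permutation fixes $i$, so the left side is $f_{i\tau}$, forcing $2f_{i\tau} = 0$, i.e. $f_{i\tau} = 0$.

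Combining the two steps: the $\rho$ indices $\tau$ with $w_N\mathbf{f}_\tau = \mathbf{f}_\tau$ all satisfy $([i],\mathbf{f}_\tau) = 0$, hence $\{\tau : w_N\mathbf{f}_\tau = \mathbf{f}_\tau\}\subseteq\Sigma'(i)$, giving $|\Sigma'(i)|\geq\rho$ and therefore $n - \dim\Theta_i\geq\rho$. I do not expect a serious obstacle here, since almost all the ingredients are assembled in the paragraph preceding the theorem statement; the one point requiring a little care is making the coordinate computation with $B(N)$ precise — one must be careful about whether $B(N)$ or its transpose governs the action on $X$ (the paper uses the transpose of Gross's convention), but since $B(N)$ is a permutation matrix of order dividing $2$ it equals its own transpose, so the ambiguity is harmless. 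A secondary point to state cleanly is why exactly $\rho$ of the cuspidal eigenvectors have $w_N$-sign $+1$, which follows from the $\mathbb{T}$-module isomorphism $X_0\otimes\mathbb{C}\simeq S_2(\Gamma_0(N))$ in Fact 2.1 together with $w_N = -T_N$, transporting the notion of Atkin--Lehner sign between the two sides.
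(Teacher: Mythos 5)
Your proposal is correct and follows essentially the same route as the paper: the author also reduces to showing $\{\tau : T_N\mathbf{f}_\tau = -\mathbf{f}_\tau\}\subset\Sigma'(i)$ via the observation that $B(N)$ is a permutation matrix with $B(N)_{ii}=1$ when $E_i$ is defined over $\mathbb{F}_N$, forcing $f_{i\tau}=0$ from the eigenvalue equation, and then invokes $w_N=-T_N$ and Theorem 1.1. Your write-up simply makes explicit the coordinate computation and the transfer of the Atkin--Lehner sign across the isomorphism $X_0\otimes\mathbb{C}\simeq S_2(\Gamma_0(N))$, both of which the paper leaves implicit.
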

\begin{remark} {\bf Theorem 3.4} has been obtained by Ohta (see \cite{Ohta} \S 1) and Pizer (\cite{Pizer} {\bf Proposition 3.1}). 
\end{remark}

\begin{thm}
Suppose that there is a totally real number field $F$ of degree $n-1$ over ${\mathbb Q}$ satisfying ${\mathbb T}_0\otimes{\mathbb Q}\simeq F$. Then 
\[\Theta_i=M_2(\Gamma_0(N)).\]

%
\end{thm}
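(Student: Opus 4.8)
The plan is to reduce, via \textbf{Theorem 1.1} and \textbf{Theorem 3.3}, to the single assertion that $[i]$ generates $X\otimes{\mathbb Q}$ as a ${\mathbb T}$-module, and then to extract this from the hypothesis. Concretely: by \textbf{Theorem 1.1}, ${\rm dim}\,\Theta_i=|\Sigma(i)|$, and by \textbf{Theorem 3.3} the complement $\Sigma^{\prime}(i)=\{1,\dots,n\}\setminus\Sigma(i)$ indexes the spectral decomposition of ${\rm Ker}\,\phi_{[i]}\otimes{\mathbb R}$; moreover the proof of \textbf{Theorem 3.3} identifies ${\rm Ker}\,\phi_{[i]}$ with $({\mathbb T}[i]\otimes{\mathbb Q})^{\perp}$. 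Hence if ${\mathbb T}[i]\otimes{\mathbb Q}=X\otimes{\mathbb Q}$, then ${\rm Ker}\,\phi_{[i]}=0$, so $\Sigma^{\prime}(i)=\emptyset$, ${\rm dim}\,\Theta_i=n={\rm dim}\,M_2(\Gamma_0(N))$, and $\Theta_i=M_2(\Gamma_0(N))$ because $\Theta_i\subseteq M_2(\Gamma_0(N))$. So the whole proof comes down to showing ${\mathbb T}[i]\otimes{\mathbb Q}=X\otimes{\mathbb Q}$ for every $i$.

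Next I would translate the hypothesis. Combining the isomorphism (\ref{eq:eq2-11}) with ${\mathbb T}_0\otimes{\mathbb Q}\simeq F$ gives ${\mathbb T}\otimes{\mathbb Q}\simeq F\times{\mathbb Q}$, compatibly with the orthogonal, ${\mathbb T}$-stable decomposition $X\otimes{\mathbb Q}=(X_0\otimes{\mathbb Q})\,\hat{\oplus}\,{\mathbb Q}{\mathbf b}$ with ${\mathbf b}\in(X_0\otimes{\mathbb Q})^{\perp}$. On the line ${\mathbb Q}{\mathbf b}$ the algebra acts through the factor ${\mathbb Q}$ (via $\sigma$), hence freely of rank one there. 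On $X_0\otimes{\mathbb Q}$ the action factors through ${\mathbb T}_0\otimes{\mathbb Q}\simeq F$, so $X_0\otimes{\mathbb Q}$ is a vector space over the field $F$; since ${\rm dim}_{\mathbb Q}(X_0\otimes{\mathbb Q})=n-1=[F:{\mathbb Q}]$, it is one-dimensional over $F$. Therefore $X\otimes{\mathbb Q}$ is free of rank one over ${\mathbb T}\otimes{\mathbb Q}=F\times{\mathbb Q}$.

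Consequently, writing $[i]=x_i+c_i{\mathbf b}$ with $x_i\in X_0\otimes{\mathbb Q}$ and $c_i\in{\mathbb Q}$, the element $[i]$ generates $X\otimes{\mathbb Q}$ over ${\mathbb T}\otimes{\mathbb Q}$ exactly when it is a unit, i.e. when $x_i\neq0$ and $c_i\neq0$. For $c_i$: by \textbf{Lemma 3.4}, $([i],{\mathbf f}_n)\neq0$ since $\partial([i])=\epsilon\neq0$, while $([i],{\mathbf f}_n)=c_i\,||{\mathbf b}||$ because ${\mathbf b}\perp X_0$; hence $c_i\neq0$. For $x_i$: if $x_i=0$, then $[i]=c_i{\mathbf b}$ with $c_i\neq0$, and choosing any $j\neq i$ (such $j$ exists, as $F$ has degree $n-1\geq1$, so $n\geq2$) the same argument gives $[j]=x_j+c_j{\mathbf b}$ with $c_j\neq0$, whence $([i],[j])=c_ic_j\,||{\mathbf b}||^2\neq0$, contradicting $([i],[j])=w_i\delta_{ij}=0$; so $x_i\neq0$. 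Thus $[i]$ is a generator, ${\mathbb T}[i]\otimes{\mathbb Q}=X\otimes{\mathbb Q}$, and the proof is complete.

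The argument is essentially formal; its real content is the observation that the hypothesis forces $X_0\otimes{\mathbb Q}$ to be a one-dimensional $F$-vector space, so that \emph{every} nonzero element of it generates it as a ${\mathbb T}_0$-module. I do not anticipate a genuine obstacle: the only mildly delicate points are the two nonvanishing statements $c_i\neq0$ and $x_i\neq0$, both of which fall out of the behaviour of $\partial$ together with the non-degeneracy of the monodromy pairing. The care required is purely bookkeeping — keeping the Eisenstein line ${\mathbb Q}{\mathbf b}$ separate from $X_0\otimes{\mathbb Q}$ throughout.
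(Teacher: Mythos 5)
Your proof is correct, and it diverges from the paper's at the decisive step, so a comparison is worthwhile. Both arguments begin the same way: the hypothesis makes $X_0\otimes{\mathbb Q}$ a one-dimensional $F$-vector space, so ${\rm Ker}\,\phi_{[i]}$ (an $F$-subspace, by the proof of {\bf Theorem 3.3}) is either $0$ or all of $X_0\otimes{\mathbb Q}$, i.e.\ $\Sigma(i)=\{1,\dots,n\}$ or $\Sigma(i)=\{n\}$. The paper does not exclude the second alternative by linear algebra; instead it shows (following the referee) that $\Theta_i={\mathbb C}f_n$ forces $\theta_{ij}=\tfrac{12}{(N-1)w_i}f_n$, whence by comparing the $q^N$-coefficients $\tfrac{12}{(N-1)w_i}=B(N)_{ij}\in{\mathbb Z}$, so $N-1\mid 12$, $N\in\{2,3,5,7,13\}$, and $X_0(N)$ has genus zero, so that alternative (2) collapses into (1). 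You instead rule out the second alternative outright: since ${\rm Ker}\,\phi_{[i]}=({\mathbb T}[i]\otimes{\mathbb Q})^{\perp}$, the bad case occurs exactly when the $X_0$-component $x_i$ of $[i]=x_i+c_i{\mathbf b}$ vanishes, and you kill that with $([i],[j])=w_i\delta_{ij}=0$ for $j\neq i$ together with $c_i,c_j\neq 0$ (from $\partial([i])=\epsilon\neq 0$ and {\bf Lemma 3.4}) and positive definiteness of the monodromy pairing. Your route is purely linear-algebraic and self-contained, needing no integrality of Brandt coefficients nor the genus-zero classification; it also yields the stronger observation that for $n\geq 2$ one always has $[i]\notin{\mathbb Q}{\mathbf b}$, i.e.\ $\Sigma(i)\neq\{n\}$, independently of the hypothesis on ${\mathbb T}_0\otimes{\mathbb Q}$. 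The paper's route, by contrast, exposes the arithmetic reason the degenerate case can only happen for $N-1\mid 12$. The only points to watch in your write-up are cosmetic: ``unit'' should be read as ``generator of the free rank-one module $X\otimes{\mathbb Q}$ over $F\times{\mathbb Q}$,'' and your reduction to $n\geq 2$ is legitimate because a number field of degree $n-1$ requires $n\geq 2$ (and for $n=1$ the conclusion is trivial).
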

\begin{proof}  As we have seen (cf.(\ref{eq:eq2-8})) $X_0\otimes{\mathbb Q}$ is a ${\mathbb T}_0\otimes{\mathbb Q}$-module and the proof of {\bf Theorem 3.3} shows that ${\rm Ker}\phi_{[i]}$ is a ${\mathbb T}_0\otimes{\mathbb Q}$-submodule of $X_0\otimes{\mathbb Q}$ (see also {\bf Remark 3.1}). On the other hand, since we have assumed that ${\mathbb T}_0\otimes{\mathbb Q}$ is isomorphic to a totally real field $F$ with $[F:{\mathbb Q}]=n-1$, ${\rm Ker}\phi_{[i]}$ is a $F$-vector space satisfying ${\rm dim}_F{\rm Ker}\phi_{[i]} \leq 1$. Therefore $\Sigma(i)=\{1,\cdots, n\}$ or $\Sigma(i)=\{n\}$ according to ${\rm dim}_F{\rm Ker}\phi_{[i]}=0$ or $1$. Now {\bf Theorem 1.1} implies that one of the following occurs.
\begin{enumerate}[(1)]
\item 
\[\Theta_i=M_2(\Gamma_0(N)).\]
\item
\[\Theta_i={\mathbb C}f_n, \quad f_n=\frac{N-1}{24}+\sum_{m=1}^{\infty}\sigma(m)_Nq^m.\]
\end{enumerate}
(The following proof is suggested by the referee.) We remark that (2) automatically implies (1). In fact, if (2) holds, comparing the constant terms 
\[\theta_{ij}=\frac{12}{(N-1)w_i}f_n, \quad \forall j.\]
Let us look at the coefficients of $q^N$. Since $\sigma(N)_N=1$
\[\frac{12}{(N-1)w_i}=B(N)_{ij}\in {\mathbb Z}\]
and $N-1$ divides $12$. Thus $N$ is one of
\[2,\,3,\,5,\,7,\,13\]
and the genus of $X_0(N)$ is known to be zero in these cases (see also the remark below). Thus
\[M_2(\Gamma_0(N))={\mathbb C}f_n=\Theta_i.\]
\end{proof}
\begin{remark}
One finds that a prime $N$ which satisfies the assumption of {\bf Theorem 3.5} is contained in
\[\{2,\,3,\,5,\,7,\,11,\,13,\,17,\, 19,\, 23,\, 29,\, 31, \,41, \,47,\, 59,\,71\},\]
which are listed up in \cite{Pizer} {\bf Theorem 3.2} (we learned the following argument from Ohta). Due to Ribet it is known that ${\rm End}(J_0(N))\otimes{\mathbb Q}={\mathbb T}_0\otimes{\mathbb Q}$ and the assumption implies that ${\rm End}(J_0(N))\otimes{\mathbb Q}=F$ (\cite{Ribet1975} {\bf Corollary 3.3}). Therefore $J_0(N)$ is absolutely simple. On the other hand, let $\Gamma_0(N)^{+}$ be the subgroup of ${\rm GL}_2^{+}({\mathbb Q}):=\{\gamma \in {\rm GL}_2({\mathbb Q})\,|\, {\rm det}\gamma >0\}$ generated by $\Gamma_0(N)$ and the involution $w_N$, and let $X_0(N)^{+}$ be the compactification of the quotient of the upper half plane by $\Gamma_0(N)^{+}$. Since the Jacobian of $X_0(N)^{+}$ is a proper subvariety of $J_0(N)$, the genus of $X_0(N)^{+}$ is zero. This will be happen if $N <37$ or $N=41,47,59,71$, which proves the claim. Moreover a numerical experiment shows that each of 
\[N=11, 17, 19, 23, 29, 31, 41, 47, 59\]
satisfies the assumption of {\bf Theorem 3.5} with $n\geq 2$. Thus the theorem explains Pizer's result except the case $N=71$ (if $N=71$, ${\mathbb T}_0\otimes{\mathbb Q}=F_1\times F_2$, where $F_i$ is a totally real field of degree $3$ for $i=1,2$).


\end{remark}
\section{Examples}
Here are examples which illustrate our theory.
\begin{ex} Let $N=11$. By Eichler's mass formula we see that $n=2$ and $(w_1,w_2)=(2,3)$. Therefore there are two isomorphism classes of supersingular elliptic curves over $\overline{\mathbb F}_{11}$, which are denoted by $\{[1],[2]\}$.  From \cite{Gross}, \S 6,  we find
\[B(0)=\frac{1}{2}\left(  \begin{array}{cc}
    1/2 & 1/2 \\ 
    1/3 & 1/3 \\ 
  \end{array}
\right) ,\quad B(3)=\left(   \begin{array}{cc}
    2 & 3 \\ 
    2 & 1 \\ 
  \end{array}
\right). \]
(Remember that our Brandt matrix is the transposition of Gross's one, and the index of the theta function $\theta_{ij}$ is interchanged from his notation). The eigenvectors of $T_3$ in $X$ are
\begin{equation}\label{eq:eq4-1}{\mathbf f}_1=\frac{[1]-[2]}{\sqrt{5}},\quad {\mathbf f}_2=\frac{3[1]+2[2]}{\sqrt{30}}.\end{equation}
which satisfies
\[T_3({\mathbf f}_1)=-{\mathbf f}_1,\quad T_3({\mathbf f}_2)=4{\mathbf f}_2.\]
Comparing the eigenvalues with the coefficient of $q^3$ of the Fourier expansion, we find that the eigenvector ${\mathbf f}_i$ correspond to Hecke eigenforms $f_i$ by the isomorphism of Hecke modules $X_{\mathbb C} \simeq M_2(\Gamma_0(11))$, where
\begin{equation}\label{eq:eq4-2}f_1=\theta_{11}-\theta_{12}=q-2q^2-q^3+2q^4+q^5+2q^6-2q^7-2q^9+\cdots,\end{equation}
and
\begin{equation}\label{eq:eq4-3}f_2=\theta_{11}+\theta_{21}=\frac{5}{12}+\sum_{m=1}^{\infty}\sigma(m)_{11}q^m.\end{equation}
(See \cite{Gross} {\upshape (6.4)} and {\upshape (6.6)}). {\bf Theorem 1.1} implies that
\[\Theta_1=\Theta_2={\mathbb C}f_1\oplus {\mathbb C}f_2(=M_2(\Gamma_0(11)))\]
and 
\[{\rm dim}\Theta_1={\rm dim}\Theta_2=2.\]
Let us 
investigate (\ref{eq:eq4-2}) and (\ref{eq:eq4-3}) from our point of view.
Application of {\bf Theorem 3.2 (1)} to (\ref{eq:eq4-1}) gives
\begin{equation}\label{eq:eq4-4}f_1=\theta_{11}-\theta_{12},\quad f_2=\theta_{11}+\frac{2}{3}\theta_{12}.\end{equation}
which implies 
\[\Theta_1=\langle f_1,f_2 \rangle.\]
Moreover, since $2\theta_{12}=3\theta_{21}$, the second equation is
\[f_2=\theta_{11}+\theta_{21}\]
and (\ref{eq:eq4-4}) recovers (\ref{eq:eq4-2}) and (\ref{eq:eq4-3}). On the other hand {\bf Theorem 3.2 (2)} yields
\[\theta_{11}=\frac{2}{5}f_1+\frac{3}{5}f_2,\quad \theta_{12}=-\frac{3}{5}f_1+\frac{3}{5}f_2.\]
This equation is also derived from (\ref{eq:eq4-4}). 

\end{ex}
\begin{ex} Suppose $N=37$. By {\upshape(2.1)} and Eichler's mass formula, we find that $n=3$ and $w_i=1$ for $i=1,2,3$. According to Pizer (\cite{Pizer}, {\bf Theorem 3.2}), this is the smallest prime level for which the Hecke conjecture fails. That is, there is a certain maximal order ${\mathcal O}$ of the definite quaternion algebra ${\mathbb B}$ ramified at $37$ such that the dimension of the space of the theta functions is less than $3$. We investigate this example from our viewpoint. There are three isomorphism classes of supersingular elliptic curves over $\overline{\mathbb F}_{37}$, which are denoted by $\{[1],[2],[3]\}$. The action of $T_3$ on $X$ is
\[T_3([1])=2[1]+[2]+[3],\quad T_3([2])=[1]+3[3],\quad T_3([3])=[1]+3[2],\]
and the corresponding Brandt matrix is 
\[B(3)=\left(  \begin{array}{ccc}
    2 & 1 & 1 \\ 
    1 & 0 & 3 \\ 
    1 & 3 & 0 \\ 
  \end{array}
\right).\]
The eigenvalues of $B(3)$ are $\{1,-3, 4\}$ and the corresponding eigenvectors are
\[{\mathbf f}_1=\frac{-2[1]+[2]+[3]}{\sqrt{6}},\quad {\mathbf f}_2=\frac{-[2]+[3]}{\sqrt{2}}, \quad {\mathbf f}_3=\frac{[1]+[2]+[3]}{\sqrt{3}},\]
respectively.
Comparing the eigenvalues with the coefficient of $q^3$ of the Fourier expansion, we find that the eigenvectors $\{{\mathbf f}_1, {\mathbf f}_2, {\mathbf f}_3\}$ correspond to the Hecke eigenforms $\{f_1, f_2, f_3\}$ by the isomorphism of Hecke modules $X_{\mathbb C} \simeq M_2(\Gamma_0(37))$, where
\[f_1=q+q^3-2q^4-q^7-2q^9+\cdots,\quad f_2=q-2q^2-3q^3+2q^4-2q^5+6q^6-q^7+6q^9+\cdots\]
and
\[f_3=\frac{3}{2}+\sum_{m=1}^{\infty}\sigma(m)_{37}q^m.\]
Now {\bf Theorem 1.1} shows that
\begin{equation}\label{eq:eq4-5}\Theta_1={\mathbb C}f_1\oplus {\mathbb C}f_3,\quad \Theta_2=\Theta_3={\mathbb C}f_1\oplus {\mathbb C}f_2\oplus {\mathbb C}f_3(=M_2(\Gamma_0(37)))\end{equation}

and 
\[{\rm dim}\Theta_1=2,\quad {\rm dim}\Theta_2={\rm dim}\Theta_3=3.\]
Therefore we see that the Hecke conjecture fails for $\Theta_1$, which does not contain $f_2$. Let us investigate the relation between the theta functions and Hecke eigenforms for $\Theta_1$.
We find that {\bf Theorem 3.2 (1)} and {\bf Theorem 3.2 (2)} imply
\[f_1=\frac{1}{2}(2\theta_{11}-\theta_{12}-\theta_{13}),\quad f_{3}=\theta_{11}+\theta_{12}+\theta_{13},\]
%
and
\[\theta_{11}=\frac{2}{3}f_1+\frac{1}{3}f_3,\quad \theta_{12}= \theta_{13}=-\frac{1}{3}f_1+\frac{1}{3}f_3,\]
respectively.  

\end{ex}


\end{document}